\begin{document}

\theoremstyle{plain}

\newtheorem{thm}{Theorem}[section]
\newtheorem{lem}[thm]{Lemma}
\newtheorem{pro}[thm]{Proposition}
\newtheorem{cor}[thm]{Corollary}
\newtheorem{que}[thm]{Question}
\newtheorem{rem}[thm]{Remark}
\newtheorem{defi}[thm]{Definition}
\newtheorem{Question}[thm]{Question}
\newtheorem{Conjecture}[thm]{Conjecture}

\newtheorem*{thmA}{Theorem A}
\newtheorem*{thmB}{Theorem B}
\newtheorem*{thmC}{Theorem C}

\newtheorem*{thmAcl}{Main Theorem$^{*}$}
\newtheorem*{thmBcl}{Theorem B$^{*}$}

\newcommand{\Maxn}{\operatorname{Max_{\textbf{N}}}}
\newcommand{\Syl}{\operatorname{Syl}}
\newcommand{\dl}{\operatorname{dl}}
\newcommand{\Con}{\operatorname{Con}}
\newcommand{\cl}{\operatorname{cl}}
\newcommand{\Stab}{\operatorname{Stab}}
\newcommand{\Aut}{\operatorname{Aut}}
\newcommand{\Ker}{\operatorname{Ker}}
\newcommand{\fl}{\operatorname{fl}}
\newcommand{\Irr}{\operatorname{Irr}}
\newcommand{\IBr}{\operatorname{IBr}}
\newcommand{\SL}{\operatorname{SL}}
\newcommand{\GL}{\operatorname{GL}}
\newcommand{\FF}{\mathbb{F}}
\newcommand{\NN}{\mathbb{N}}
\newcommand{\N}{\mathbf{N}}
\newcommand{\C}{\mathbf{C}}
\newcommand{\OO}{\mathbf{O}}
\newcommand{\F}{\mathbf{F}}

\renewcommand{\labelenumi}{\upshape (\roman{enumi})}

\newcommand{\PSL}{\operatorname{PSL}}
\newcommand{\PSU}{\operatorname{PSU}}

\providecommand{\V}{\mathrm{V}}
\providecommand{\E}{\mathrm{E}}
\providecommand{\ir}{\mathrm{Irr_{rv}}}
\providecommand{\Irrr}{\mathrm{Irr_{rv}}}
\providecommand{\re}{\mathrm{Re}}

\def\Z{{\mathbb Z}}
\def\C{{\mathbb C}}
\def\Q{{\mathbb Q}}
\def\irr#1{{\rm Irr}(#1)}
\def\ibr#1{{\rm IBr}(#1)}
\def\irrv#1{{\rm Irr}_{\rm rv}(#1)}
\def \c#1{{\cal #1}}
\def\cent#1#2{{\bf C}_{#1}(#2)}
\def\syl#1#2{{\rm Syl}_#1(#2)}
\def\nor{\triangleleft\,}
\def\oh#1#2{{\bf O}_{#1}(#2)}
\def\Oh#1#2{{\bf O}^{#1}(#2)}
\def\zent#1{{\bf Z}(#1)}
\def\det#1{{\rm det}(#1)}
\def\ker#1{{\rm ker}(#1)}
\def\norm#1#2{{\bf N}_{#1}(#2)}
\def\alt#1{{\rm Alt}(#1)}
\def\iitem#1{\goodbreak\par\noindent{\bf #1}}
 \def \mod#1{\, {\rm mod} \, #1 \, }
\def\sbs{\subseteq}

\def\gc{{\bf GC}}
\def\ngc{{non-{\bf GC}}}
\def\ngcs{{non-{\bf GC}$^*$}}
\newcommand{\notd}{{\!\not{|}}}
\newcommand{\Out}{{\mathrm {Out}}}
\newcommand{\Mult}{{\mathrm {Mult}}}
\newcommand{\Inn}{{\mathrm {Inn}}}
\newcommand{\IBR}{{\mathrm {IBr}}}
\newcommand{\IBRL}{{\mathrm {IBr}}_{\ell}}
\newcommand{\IBRP}{{\mathrm {IBr}}_{p}}
\newcommand{\ord}{{\mathrm {ord}}}
\def\id{\mathop{\mathrm{ id}}\nolimits}
\renewcommand{\Im}{{\mathrm {Im}}}
\newcommand{\Ind}{{\mathrm {Ind}}}
\newcommand{\diag}{{\mathrm {diag}}}
\newcommand{\soc}{{\mathrm {soc}}}
\newcommand{\End}{{\mathrm {End}}}
\newcommand{\sol}{{\mathrm {sol}}}
\newcommand{\Hom}{{\mathrm {Hom}}}
\newcommand{\Mor}{{\mathrm {Mor}}}
\newcommand{\Mat}{{\mathrm {Mat}}}

\def\rank{\mathop{\mathrm{ rank}}\nolimits}
\newcommand{\Tr}{{\mathrm {Tr}}}
\newcommand{\tr}{{\mathrm {tr}}}
\newcommand{\Gal}{{\it Gal}}
\newcommand{\Spec}{{\mathrm {Spec}}}
\newcommand{\ad}{{\mathrm {ad}}}
\newcommand{\Sym}{{\mathrm {Sym}}}
\newcommand{\Char}{{\mathrm {char}}}
\newcommand{\pr}{{\mathrm {pr}}}
\newcommand{\rad}{{\mathrm {rad}}}
\newcommand{\abel}{{\mathrm {abel}}}
\newcommand{\codim}{{\mathrm {codim}}}
\newcommand{\ind}{{\mathrm {ind}}}
\newcommand{\Res}{{\mathrm {Res}}}
\newcommand{\Ann}{{\mathrm {Ann}}}
\newcommand{\Ext}{{\mathrm {Ext}}}
\newcommand{\Alt}{{\mathrm {Alt}}}
\newcommand{\Br}{{\mathrm {Br}}}
\newcommand{\AAA}{{\sf A}}
\newcommand{\SSS}{{\sf S}}
\newcommand{\CC}{{\mathbb C}}
\newcommand{\CB}{{\mathbf C}}
\newcommand{\RR}{{\mathbb R}}
\newcommand{\QQ}{{\mathbb Q}}
\newcommand{\ZZ}{{\mathbb Z}}

\newcommand{\NB}{{\mathbf N}}
\newcommand{\ZB}{{\mathbf Z}}
\newcommand{\EE}{{\mathbb E}}
\newcommand{\PP}{{\mathbb P}}
\newcommand{\GC}{{\mathcal G}}
\newcommand{\HC}{{\mathcal H}}
\newcommand{\GA}{{\mathfrak G}}
\newcommand{\TC}{{\mathcal T}}
\newcommand{\SC}{{\mathcal S}}
\newcommand{\RC}{{\mathcal R}}
\newcommand{\bG}{{ \bf G}}
\newcommand\bH{{\bf H}}
\newcommand{\bL} {{\bf L}}
\newcommand{\bU} {{\bf U}}
\newcommand{\bM}{{\bf M}}
\newcommand{\bT}{{\bf T}}
\newcommand{\bX}{{\bf X}}
\newcommand{\bY}{{\bf Y}}
\newcommand{\GCD}{\GC^{*}}
\newcommand{\TCD}{\TC^{*}}
\newcommand{\FD}{F^{*}}
\newcommand{\GD}{G^{*}}
\newcommand{\HD}{H^{*}}
\newcommand{\GCF}{\GC^{F}}
\newcommand{\TCF}{\TC^{F}}
\newcommand{\PCF}{\PC^{F}}
\newcommand{\GCDF}{(\GC^{*})^{F^{*}}}
\newcommand{\RGTT}{R^{\GC}_{\TC}(\theta)}
\newcommand{\RGTA}{R^{\GC}_{\TC}(1)}
\newcommand{\Om}{\Omega}
\newcommand{\eps}{\epsilon}
\newcommand{\al}{\alpha}
\newcommand{\chis}{\chi_{s}}
\newcommand{\sigmad}{\sigma^{*}}
\newcommand{\PA}{\boldsymbol{\alpha}}
\newcommand{\gam}{\gamma}
\newcommand{\lam}{\lambda}
\newcommand{\la}{\langle}
\newcommand{\ra}{\rangle}
\newcommand{\hs}{\hat{s}}
\newcommand{\htt}{\hat{t}}
\newcommand{\tn}{\hspace{0.5mm}^{t}\hspace*{-0.2mm}}
\newcommand{\ta}{\hspace{0.5mm}^{2}\hspace*{-0.2mm}}
\newcommand{\tb}{\hspace{0.5mm}^{3}\hspace*{-0.2mm}}
\def\skipa{\vspace{-1.5mm} & \vspace{-1.5mm} & \vspace{-1.5mm}\\}
\newcommand{\tw}[1]{{}^#1\!}
\renewcommand{\mod}{\bmod \,}



\title[Blocks with transitive fusion systems]
{Blocks with transitive fusion systems}

\author[H\'ethelyi]{L\'aszl\'o H\'ethelyi}
\address{Department of Algebra, Budapest University of Technology and Economics, H-1521 Budapest, Hungary}
\email{hethelyi@math.bme.hu}

\author[Kessar]{Radha Kessar}
\address{Department of Mathematics, City University London, Northampton Square, London, EC1V 0HB, Great Britain}
\email{radha.kessar.1@city.ac.uk}

\author[K\"ulshammer]{Burkhard K\"ulshammer}
\address{Institut f\"ur Mathematik, Friedrich-Schiller-Universit\"at, 07743 Jena, Germany}
\email{kuelshammer@uni-jena.de}

\author[Sambale]{Benjamin Sambale}
\address{Institut f\"ur Mathematik, Friedrich-Schiller-Universit\"at, 07743 Jena, Germany}
\email{benjamin.sambale@uni-jena.de}

\thanks{The first author gratefully acknowledges support by the Hungarian National Science Foundation Research No.\ 77476.
The third author gratefully acknowledges support by the DFG (SPP 1388). The fourth author gratefully acknowledges support by the Carl Zeiss Foundation and the Daimler and Benz
Foundation.}

\keywords{Block, defect group, fusion system}

\subjclass[2010]{Primary 20C20; Secondary 20D20}

\begin{abstract}
Suppose that all nontrivial subsections of a $p$-block $B$ are conjugate (where $p$ is a prime). By using the classification of the finite simple groups, we prove that the defect groups 
of $B$ are either extraspecial of order $p^3$ with $p \in \{3,5\}$ or elementary abelian.
\end{abstract} 

\maketitle

\section{Introduction}
Let $p$ be a prime, and let ${\mathcal F}$ be a saturated fusion system on a finite $p$-group $P$ (cf. \cite{AKO} and \cite{CravenBook}). We call ${\mathcal F}$ \textit{transitive} if any two nontrivial elements in $P$ are ${\mathcal F}$-conjugate. 
In this case, $P$ has exponent $\exp(P) \leq p$, and $\Aut_{\mathcal F}(P)$ acts transitively on $\mathrm{Z}(P) \setminus \{1\}$. This paper is motivated by the following:

\begin{Conjecture}
(cf. \cite{KNST}) Let ${\mathcal F}$ be a transitive fusion system on a finite $p$-group $P$ where $p$ is a prime. Then $P$ is either extraspecial of order $p^3$ or elementary 
abelian.
\end{Conjecture}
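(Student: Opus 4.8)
The plan is to strip away the abelian and the ``local'' cases by elementary means, then use the theory of the generalized Fitting subsystem to reduce to a single simple fusion system, and finally invoke the classification of the finite simple groups. Since $\exp(P)\le p$, if $p=2$ then $P$ is elementary abelian and we are done; so I assume $p$ is odd and, aiming at the extraspecial conclusion, that $P$ is nonabelian (an abelian $P$ is automatically elementary abelian). The subgroup $\mathbf{O}_p(\mathcal F)$ is strongly closed, hence a union of $\mathcal F$-classes together with $1$; as $P\setminus\{1\}$ is a single class, $\mathbf{O}_p(\mathcal F)\in\{1,P\}$. If $\mathbf{O}_p(\mathcal F)=P$ then $P$ is normal in $\mathcal F$, so every $\mathcal F$-morphism extends to $\Aut_{\mathcal F}(P)$ and all fusion is realized inside the single automorphism group $\Aut_{\mathcal F}(P)$; but $\mathrm Z(P)$ is characteristic, hence $\Aut_{\mathcal F}(P)$-invariant, and a transitive automorphism action on $P\setminus\{1\}$ would force $\mathrm Z(P)=P$, contrary to nonabelianness. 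Thus I may assume $\mathbf{O}_p(\mathcal F)=1$. The same strongly-closed argument applied to $\mathrm Z(\mathcal F)\le\mathrm Z(P)$ gives $\mathrm Z(\mathcal F)=1$ once $|P|>p$; and, fixing $z\in\mathrm Z(P)\setminus\{1\}$ together with an $\mathcal F$-isomorphism carrying each $x$ to $z$, the elements $zx^{-1}$ lie in and exhaust $P$, so the focal subgroup equals $P$.

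Next I would pass to the generalized Fitting subsystem. Since $\mathbf{O}_p(\mathcal F)=1$, Aschbacher's theory gives $F^*(\mathcal F)=E(\mathcal F)\ne1$, a central product of components $\mathcal E_1,\dots,\mathcal E_r$ carried by subgroups $P_1,\dots,P_r$, with $C_P(F^*(\mathcal F))\le F^*(\mathcal F)$. If $r\ge2$, pick nontrivial $x_1\in P_1$ and $x_2\in P_2$; every $\mathcal F$-morphism permutes the components and preserves $F^*(\mathcal F)$, so the product $x_1x_2$, whose support meets two distinct components, can never be carried to an element supported on a single component, contradicting transitivity. Hence $r=1$ and $\mathcal E:=F^*(\mathcal F)$ is the unique component, quasisimple, carried by some strongly closed $R\le P$ with $C_P(R)\le R$. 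Now $\mathrm Z(P)\le C_P(R)\le R$, so every central element lies in $R$; since each $x\in P$ is $\mathcal F$-conjugate to some $z\in\mathrm Z(P)\subseteq R$ and $R$ is strongly closed, we get $x\in R$, i.e.\ $R=P$. Therefore $\mathcal F$ is quasisimple on $P$, and because $\mathrm Z(\mathcal F)=1$ it is in fact a \emph{simple} transitive saturated fusion system.

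It remains to classify the transitive simple fusion systems. For those that are realizable, say $\mathcal F=\mathcal F_P(G)$ with $F^*(G)$ quasisimple, transitivity translates into the condition that $G$ has a Sylow $p$-subgroup of exponent $p$ all of whose nonidentity elements are $G$-conjugate, and here I would run through the classification family by family. For alternating groups and for groups of Lie type one checks that the single-class condition forces either an elementary abelian Sylow subgroup (the rank-one and cross-characteristic situations) or an extraspecial Sylow subgroup of order $p^3$, while the finitely many sporadic groups are settled directly. On the nonabelian side the possibilities are then completely pinned down by the Ruiz--Viruel classification of fusion systems on extraspecial groups of order $p^3$, confirming that exactly the extraspecial target occurs.

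The main obstacle is the treatment of \emph{exotic} simple fusion systems, which are invisible to the classification of the finite simple groups: the Ruiz--Viruel examples on extraspecial groups of order $7^3$ already show that the extraspecial conclusion genuinely exceeds what group theory alone produces, so one cannot simply quote the classification to finish. What is missing is a proof that no exotic transitive fusion system can live on a nonabelian $p$-group of order larger than $p^3$, and in the absence of any classification of exotic fusion systems this is precisely where the argument stalls---which is why the statement is recorded only as a conjecture. The companion block-theoretic result is instead obtained by importing the extra numerical constraints coming from the defect-group and character theory of the block, and these suffice to bound $p$ and to confine the extraspecial case to $p\in\{3,5\}$.
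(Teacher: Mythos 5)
The statement you were asked to prove is Conjecture~1.1, and the paper does not prove it: it is recorded precisely as an open conjecture (attributed to \cite{KNST}), and the body of the paper establishes only the special case of fusion systems of blocks (Theorem~\ref{mainthm}). Your proposal is therefore not, and could not honestly be, a proof --- and you say so explicitly. Your diagnosis is exactly right: the reduction to a simple saturated fusion system goes through, but the classification of finite simple groups only controls the \emph{realizable} simple systems, and nothing currently rules out an exotic transitive system on a nonabelian group of order exceeding $p^3$ (the exotic Ruiz--Viruel systems at $p=7$ already show the phenomenon is real). This is precisely why the paper restricts to blocks, where the Fong-type reductions and the structure of $\mathrm{F}^*(G)$ let one run through the simple groups; note also that in the block setting the extraspecial case is further confined to $p\in\{3,5\}$ via \cite{RV} and \cite{KS}, whereas for abstract fusion systems $p=7$ survives.

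Your reduction itself closely parallels the paper's strategy for blocks ($\mathrm{O}_p$, then the layer, then a single component), and the elementary observations ($\exp(P)\le p$, $\mathrm{Z}(P)\subseteq P'$, triviality for $p=2$) match the introduction and Proposition~\ref{indec}. Two details would need tightening if this were written out. First, the claim that an element $x_1x_2$ ``supported on two components'' can never be fused to an element supported on one: an $\mathcal F$-morphism is defined only on some subgroup containing $x_1x_2$ and need not visibly respect the component decomposition; the clean route is that the Sylow subgroup $R$ of the normal subsystem $E(\mathcal F)$ is strongly closed, so $\mathrm{Z}(P)\le \mathrm{C}_P(R)\le R$ forces $R=P$, and then one argues on the (central) product decomposition of $P$ itself, as in Proposition~\ref{indec} --- with extra care when the $P_i$ intersect nontrivially, a case your support argument does not address. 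Second, the family-by-family verification for realizable systems is exactly the content of \cite{KNST} (Sylow case) and of this paper (block case); it is substantial, not a one-line check. None of this changes the verdict: the statement is open, and your account of where and why the argument stalls is accurate.
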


Moreover, if $P$ is extraspecial of order $p^3$ then results by Ruiz and Viruel \cite{RV} imply that $p \in \{3,5,7\}$. 
Note that the conjecture is trivially true for $p=2$ since groups of exponent 2 are abelian. Thus Conjecture 1.1 is only of interest for $p>2$.
The aim of this paper is to prove the conjecture above for saturated fusion systems coming from blocks.

\begin{thm}\label{mainthm}
Let $p$ be a prime, and let $B$ be a $p$-block of a finite group $G$ with defect group $P$. 
If the fusion system $\mathcal{F} = \mathcal{F}_P(B)$ of $B$ on $P$ is transitive then $P$ is either extraspecial of order $p^3$ or elementary abelian.
\end{thm}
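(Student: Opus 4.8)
The plan is to first make the standard reductions and translate transitivity into a statement about genuine $G$-fusion, then to use block theory relative to the generalised Fitting subgroup to reduce to a quasisimple group, and finally to invoke the classification of finite simple groups to eliminate every defect group other than the elementary abelian ones and the extraspecial ones of order $p^{3}$.

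First I would dispose of the easy cases. If $P$ is abelian then $\exp(P)\le p$ already forces $P$ to be elementary abelian, and for $p=2$ every group of exponent at most $2$ is abelian; so I may assume that $p$ is odd and that $P$ is nonabelian of exponent $p$. Next I would record the group-theoretic content of transitivity. Since every morphism of $\mathcal{F}_{P}(B)$ is induced by conjugation in $G$ on Brauer pairs, two elements of $P$ are $\mathcal{F}$-conjugate precisely when the associated $B$-subsections are $G$-conjugate, and in particular $\mathcal{F}$-conjugate elements of $P$ are genuinely $G$-conjugate. Thus transitivity says that all nontrivial elements of $P$ lie in a single $G$-class, equivalently that all nontrivial $B$-subsections are conjugate, so that $k(B)=l(B)+l(b_{z})$ for a major subsection $(z,b_{z})$ with $z\in\mathrm{Z}(P)$.

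I would then split according to $\mathbf{O}_{p}(\mathcal{F})$. Because $P\setminus\{1\}$ is a single $\mathcal{F}$-class, every proper $\mathcal{F}$-invariant subgroup of $P$ is trivial, so $\mathbf{O}_{p}(\mathcal{F})\in\{1,P\}$. If $\mathbf{O}_{p}(\mathcal{F})=P$ then $\mathcal{F}=\mathcal{F}_{P}\!\left(P\rtimes\Aut_{\mathcal{F}}(P)\right)$ and $\mathcal{F}$-conjugacy is exactly the $\Aut_{\mathcal{F}}(P)$-orbit relation; a $p$-group on whose nonidentity elements a group of automorphisms acts transitively has trivial Frattini subgroup and is therefore elementary abelian, giving the first conclusion. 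So the substantial case is $\mathbf{O}_{p}(\mathcal{F})=1$. Here the usual block reductions apply: passing to $G/\mathbf{O}_{p'}(G)$ preserves $B$, $P$ and $\mathcal{F}$, so I may assume $\mathbf{O}_{p'}(G)=1$, and then $\mathbf{O}_{p}(G)\le\mathbf{O}_{p}(\mathcal{F})=1$, whence $F^{*}(G)=E(G)$ is a central product of quasisimple components. Using Fong--Reynolds together with the analysis of the $G$-action on the set of components, I would reduce to a single component $L$ governing the fusion (a transitive permutation of several components would force a wreath-type factorisation of $P$ whose many central involutions cannot all be fused to noncentral elements compatibly with saturation), so that $B$ dominates a quasisimple block of $L$ and inherits the transitive fusion system.

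The final and hardest step is the classification-dependent analysis of a quasisimple $L$, with $L/\mathrm{Z}(L)$ simple, carrying a $p$-block whose defect group has all its nontrivial elements fused. For alternating and sporadic groups this is a bounded check against the known block and defect-group data. For groups of Lie type in the defining characteristic the defect group is a full Sylow $p$-subgroup, and requiring a single fusion class on $P\setminus\{1\}$ is so restrictive that it survives only in very small rank, which I would pin down directly. The genuinely delicate part is Lie type in non-defining characteristic: there one must control the defect groups arising from the $p$-part of maximal tori and the relative Weyl group, working through $e$-cuspidal pairs and the ensuing local structure, and show that a single fusion class of nonidentity elements forces the defect group to be elementary abelian apart from the extraspecial configurations of order $p^{3}$. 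I expect this non-defining case, where in addition the bound $p\in\{3,5,7\}$ coming from \cite{RV} must be sharpened to exclude $p=7$, to be the main obstacle and the place where the classification of finite simple groups is used most heavily.
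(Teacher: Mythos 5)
Your reduction to a quasisimple group tracks the paper's own argument quite closely: the paper likewise disposes of $p=2$ and abelian $P$, handles $\mathrm{O}_p(G)\neq 1$ by noting that $\Phi(P)$ would be a proper strongly closed subgroup, makes $\mathrm{O}_{p'}(G)$ central, and then forces $P\leq \mathrm{E}(G)$ and a single component. Two caveats on that part. First, ``passing to $G/\mathrm{O}_{p'}(G)$ preserves $B$, $P$ and $\mathcal{F}$'' is false when $B$ covers a non-principal block of $\mathrm{O}_{p'}(G)$; the correct step is the Fong--Reynolds/Clifford-theoretic replacement of $G$ by a group $G^{*}$ with a \emph{central} $p'$-subgroup $H^{*}$ and a block $B^{*}$ with equivalent fusion system, which is what the paper does. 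Second, eliminating several components is not the soft saturation argument you gesture at: after one shows $P\leq \mathrm{E}(G)$ (because $P\cap \mathrm{E}(G)$ is nontrivial and strongly closed) one has an honest direct decomposition $P=P_1\times\cdots\times P_n$ with each $P_i\neq 1$, and one then needs the nontrivial fact (the paper's Proposition 2.3, proved via $\mathrm{Z}(P)\leq P'$, Krull--Remak--Schmidt and normal automorphisms) that a nonabelian $p$-group carrying a transitive fusion system is directly indecomposable.

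The genuine gap is that the heart of the proof --- the classification-dependent case analysis --- is announced rather than carried out. For Lie type in defining characteristic the argument is not a ``small rank'' check: one observes that every nontrivial $u\in P$ is fused into $\mathrm{Z}(P)$ with $P$ a Sylow $p$-subgroup of $L$, so $p\nmid |L:\mathrm{C}_L(u)|$ for every $p$-element $u$, and then invokes the Navarro--Tiep theorem to conclude that $P$ is abelian; without such a result you have no argument. For alternating groups the point is the explicit wreath-product shape of defect groups of symmetric group blocks (the factor $C_p\wr C_p$ has exponent $p^2$), again not a finite check. And the non-defining-characteristic case, which you explicitly defer as ``the main obstacle,'' is where essentially all of the paper's work lies: Fong--Srinivasan for linear and unitary groups combined with a delicate wreath-product statement (Proposition 2.4), an isogeny argument reducing types $B$, $C$, $D$ to general linear or unitary centralizers, and a long analysis of $E_6$, $E_7$, $E_8$ for $p\in\{3,5\}$ via Bonnaf\'e--Rouquier correspondents and quasi-isolated blocks. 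None of this is supplied, so what you have is a plausible strategy outline, not a proof. (Also, excluding $p=7$ in the extraspecial case is not part of this theorem; it is the separate remark resting on Ruiz--Viruel and Kessar--Stancu.)
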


If $P$ is extraspecial of order $p^3$ then the results in \cite{RV} and \cite{KS} imply that $p \in \{3,5\}$. We call a block $B$ with defect group $P$ and transitive fusion
system $\mathcal{F}_P(B)$ \textit{fusion-transitive}. 
Whenever $B$ has full defect then the theorem is a consequence of the results in \cite{KNST}. In our proof of the theorem above, we will make use of the classification of the finite simple groups.

\section{Saturated fusion systems}

We begin with some results on arbitrary saturated fusion systems.

\begin{pro}\label{indexp}
Let $p$ be a prime, and let $\mathcal{F}$ be a transitive fusion system on a finite $p$-group $P$ where $|P| \geq p^4$. Suppose that $P$ contains an abelian subgroup of index $p$. Then $P$
is abelian.
\end{pro}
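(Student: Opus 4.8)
The plan is to argue by contradiction: assume $P$ is nonabelian. Since $\mathcal F$ is transitive we may take $p$ odd (for $p=2$ the bound $\exp(P)\le p$ already forces $P$ abelian), and $P$ has exponent $p$. Let $A$ be the given abelian subgroup of index $p$; it is normal and elementary abelian of rank $n$, where $|P|=p^{n+1}\ge p^4$, so $n\ge 3$. As $P$ is nonabelian, $\cent{P}{A}=A$ and hence $\zent P\le A$. Fix $g\in P\setminus A$, let $u\in\Aut(A)$ be conjugation by $g$, write $A=\FF_p^n$ additively and set $N:=u-1$, a nilpotent endomorphism. One checks the standard identities $\zent P=\Ker N$, $P'=\Im N$ and more generally $\gamma_{k+1}(P)=\Im N^{k}$, so that the nilpotency index of $N$ equals the nilpotency class $c$ of $P$. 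Finally $\exp(P)=p$ forces $g^p=1$, and expanding $(ag)^p$ with the identity $\sum_{i=0}^{p-1}u^i=N^{p-1}$ gives $N^{p-1}(a)=0$ for all $a\in A$; hence $N^{p-1}=0$ and $c\le p-1$.

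The first key step is a center argument. The subgroup $\gamma_c(P)=\Im N^{c-1}$ is characteristic and nontrivial, and since the last nontrivial term of the lower central series is central it lies in $\zent P$. Transitivity of $\mathcal F$ makes $\Aut_{\mathcal F}(P)$ transitive on $\zent P\setminus\{1\}$, so $\zent P$ is $\Aut_{\mathcal F}(P)$-irreducible and therefore $\zent P=\gamma_c(P)=\Im N^{c-1}$. Comparing this with $\zent P=\Ker N$ and counting dimensions — $\dim\Ker N$ is the number of Jordan blocks of $N$, while $\dim\Im N^{c-1}$ is the number of blocks of maximal size $c$ — forces every Jordan block of $N$ to have size exactly $c$. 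Thus $n=bc$ with $b=\dim\zent P$, and $|P:\zent P|=p^{b(c-1)+1}$. Since $n\ge 3$ this index is at least $p^3$; as a nonabelian $p$-group has either a unique abelian subgroup of index $p$ or exactly $p+1$ of them (the latter only when $|P:\zent P|=p^2$), $A$ must be the unique one, hence characteristic and in particular weakly closed in $\mathcal F$.

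Because $A$ is abelian and weakly closed, the Burnside fusion argument shows that $\Aut_{\mathcal F}(A)$ controls $\mathcal F$-fusion in $A$; transitivity of $\mathcal F$ then makes $G_A:=\Aut_{\mathcal F}(A)\le\GL_n(\FF_p)$ transitive on $A\setminus\{1\}$. Moreover $A$ is fully normalized (being normal in $P$), so $\Aut_P(A)=\langle u\rangle\cong P/A$ is a Sylow $p$-subgroup of $G_A$, of order exactly $p$. We have therefore produced a transitive linear group $G_A$ on $\FF_p^n$ whose Sylow $p$-subgroups have order $p$, the distinguished generator $u$ of one of them having all Jordan blocks of size $c\le p-1$.

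The remaining, and hardest, step is to rule this out for $n\ge 3$ by means of Hering's classification of transitive linear groups. The condition that a Sylow $p$-subgroup has order only $p$ at once eliminates the families containing $\SL_a(q)$, $\operatorname{Sp}_{2a}(q)$ or an exceptional group, since their $p$-part exceeds $p$ unless $n=2$; a handful of sporadic examples (for $p$ odd essentially only $p^n=3^4$) are checked by hand, verifying that no element of order $p$ has the uniform block type forced above. This leaves $G_A\le\Gamma\mathrm{L}_1(p^n)$. Here the order-$p$ element $u$ necessarily projects to a nontrivial field automorphism of order $p$ with fixed field $\FF_q$, $q=p^{n/p}$, and $u$ is $\FF_q$-linear; restriction of scalars then shows that the common Jordan block size divides $[\FF_{p^n}:\FF_q]=p$, so it is $1$ or $p$. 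As $P$ is nonabelian it is not $1$, whence $c=p$, contradicting $c\le p-1$. The main obstacle is precisely this $\Gamma\mathrm{L}_1$ analysis: it is the single family surviving all the numerical constraints (transitivity on $A$, transitivity on $\zent P$, and Sylow order $p$), and its elimination is exactly what requires marrying the exponent-$p$ bound $c\le p-1$ to the field-theoretic rigidity of the unipotent element $u$.
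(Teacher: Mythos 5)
Your overall strategy is close to the paper's: reduce to the case of a unique abelian subgroup $A$ of index $p$, show $\Aut_{\mathcal F}(A)$ is transitive with Sylow $p$-subgroup of order $p$, and invoke Hering's classification. Your extra ingredient --- that the action of $u$ on $A$ has all Jordan blocks of a common size $c$ with $n=bc$ and $c\le p-1$ --- is correct and genuinely useful: it subsumes the paper's ``two abelian maximal subgroups'' case (by forcing $|P:\mathrm{Z}(P)|\ge p^3$), it disposes of the $\Gamma\mathrm{L}_1(p^n)$ case at least as cleanly as the paper's trace/norm computation, and it handles the $\mathrm{SL}_2(13)<\mathrm{GL}_6(3)$ case (where the order-$3$ elements have a Jordan block of size $3$, which is the paper's ``exponent $9$'' observation in different clothing). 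Two smaller remarks: the appeal to a ``Burnside fusion argument'' for weakly closed abelian subgroups of an abstract saturated fusion system should be replaced by the direct argument the paper uses --- a central subgroup $Z$ of order $p$ is fully normalised, hence receptive, so any $\phi:\langle x\rangle\to Z$ extends to $N_\phi\supseteq \mathrm{C}_P(x)\supseteq A$ and restricts to an element of $\Aut_{\mathcal F}(A)$ by uniqueness of $A$; and you should make explicit that fully normalised implies fully automised to get $|\Aut_{\mathcal F}(A)|_p=p$.

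The genuine gap is the sporadic case $p^n=3^4$, i.e.\ $|P|=3^5$. Your claimed by-hand verification --- ``no element of order $p$ has the uniform block type forced above'' --- fails there. The forced type is $c=2$, $b=2$, i.e.\ Jordan type $2+2$. Among the exceptional transitive subgroups of $\mathrm{GL}_4(3)$ are groups $H$ with $\mathrm{SL}_2(5)\trianglelefteq H\le \Gamma\mathrm{L}_2(9)$; their Sylow $3$-subgroups have order exactly $3$, and their order-$3$ elements lie in $\mathrm{SL}_2(5)\le\mathrm{SL}_2(9)$, hence are $\FF_9$-transvections, which upon restriction of scalars to $\FF_3$ have Jordan type precisely $2+2$. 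So these groups pass every test you impose (transitivity on $A\setminus\{1\}$, Sylow $p$-order $p$, uniform block size $c\le p-1$), and no contradiction results. This is exactly the point where the paper has to invoke Proposition 15.12 of \cite{Habil}, a substantive analysis of (fusion systems on) groups of order $3^5$ that goes beyond the linear-algebra constraints on $\Aut_{\mathcal F}(A)$; your proposal has no substitute for it, so the case $|P|=3^5$ with $P=A\rtimes\langle g\rangle$, $A$ elementary abelian of rank $4$ and $g$ of type $2+2$, remains open in your argument.
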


\begin{proof}
We assume the contrary. Then $p > 2$.

Suppose first that $P$ contains two distinct abelian subgroups $A,B$ of index $p$. Then $AB = P$, $A \cap B \subseteq \mathrm{Z}(P)$ and $|P: A \cap B| = p^2$. Since $P$ is nonabelian we
conclude that $|P:\mathrm{Z}(P)| = p^2$. Thus $1 \neq P' \subseteq \mathrm{Z}(P)$. Since $\mathrm{Aut}_\mathcal{F}(P)$ acts transitively on $\mathrm{Z}(P) \setminus \{1\}$, we conclude that
$P' = \mathrm{Z}(P)$. Hence there are $x,y \in P$ such that $P = \langle x,y \rangle$. Then $P' = \langle [x,y]\rangle$ (cf. III.1.11 in \cite{H1}); in particular, we have $|P'| = p$ and $|P| = p^3$, a contradiction.

It remains to consider the case where $P$ contains a unique abelian subgroup $A$ of index $p$. 
Let $Z$ be a subgroup of order $p$ in $\mathrm{Z}(P)$, and let $B$ be an arbitrary subgroup of order $p$ 
in $A$. By transitivity, there is an isomorphism $\phi: B \longrightarrow Z$ in $\mathcal{F}$. By definition, $Z$ is fully $\mathcal{F}$-normalised. Thus, by Proposition 4.20 in 
\cite{CravenBook}, $Z$ is also fully $\mathcal{F}$-automised and receptive. Hence $\phi$ extends to a morphism $\psi: N_\phi \longrightarrow P$ in $\mathcal{F}$. Since $|B| = p$ we have
$$A \subseteq \mathrm{N}_P(B) = \mathrm{C}_P(B) \subseteq N_\phi$$
(cf. p. 99 in \cite{CravenBook}). Since $\psi(A)$ is also an abelian subgroup of index $p$ in $A$ we conclude that $\psi(A) = A$. Thus $\psi|A \in \mathrm{Aut}_\mathcal{F}(A)$, and $\psi|A$
maps $B$ to $Z$. This shows that $\mathrm{Aut}_\mathcal{F}(A)$ acts transitively on the set of subgroups of order $p$ in $A$. 

In the following, we view $A$ as a vector space over $\mathbb{F}_p$ and $G := \mathrm{Aut}_\mathcal{F}(A)$ as a subgroup of $\mathrm{GL}(A)$. If $S$ denotes the group of scalar matrices in
$\mathrm{GL}(A)$ then $H := GS$ is a transitive subgroup of $\mathrm{GL}(A)$. The transitive linear groups were classified by Hering (cf. \cite{H} or Remark XII.7.5 in \cite{HB3}). We are
going to use the list in Theorem 15.1 of \cite{Habil}. 

Before we do this, we observe the following. By the uniqueness of $A$, $A$ is fully $\mathcal{F}$-automised, i.e. $P/A = \mathrm{N}_P(A)/\mathrm{C}_P(A) \in \mathrm{Syl}_p(\mathrm{Aut
}_\mathcal{F}(A))$. Thus $G = \mathrm{Aut}_\mathcal{F}(A)$ and $H = GS$ both have a Sylow $p$-subgroup of order $p$.

Now we write $|A| = p^n$ and go through the list in Theorem 15.1 of \cite{Habil}:

(i) $H \subseteq \Gamma\mathrm{L}_1(p^n)$; in particular, $|H|$ divides $|\Gamma\mathrm{L}_1(p^n)| = n(p^n-1)$. \\ 
In this case we can identify $A$ with the finite field $L := \mathbb{F}_{p^n}$. Moreover, $P$ is the semidirect product of $L$ with $B = \langle \beta \rangle$ where $\beta$ is a field
automorphism of $L$. For $x \in L$, we have $x\beta \in P$ and
$$1 = (x\beta)^p = x\beta x\beta \ldots x\beta = x\beta(x)\beta^2(x) \ldots \beta^{p-1}(x) = \mathrm{N}_K^L(x)$$
where $K$ is the fixed field of $\beta$. However, it is known that $\mathrm{N}_K^L(L) = K$, a contradiction.

(ii) $n=km$ where $k \ge 2$ and $\mathrm{SL}_k(p^m) \trianglelefteq H$. \\
Since the Sylow $p$-subgroups of $H$ have order $p$, we conclude that $m=1$ and $k=2$. Then $n=2$ and $|P| = p^3$, a contradiction.

(iii) $n=km$ where $k \ge 4$ is even and $\mathrm{Sp}_k(p^m)' \trianglelefteq H$. \\
Since $p > 2$ we have $\mathrm{Sp}_k(p^m)' = \mathrm{Sp}_k(p^m)$. Thus $\mathrm{Sp}_k(p^m)$ has a Sylow $p$-subgroup of order $p^{k^2/4} \geq p^4$, a contradiction.

(iv) $n = 6m$, $p=2$ and $G_2(2^m)' \trianglelefteq H$. \\
This case is impossible as $p>2$.

(v) $n=2$ and $p \in \{5,7,11,19,23,29,59\}$. \\
Then $|P| = p^3$ which is again a contradiction.

(vi) $n=4$, $p=2$ and $H \cong \mathfrak{A}_7$. \\
This case is also impossible as $p>2$.

(vii) $n=4$, $p=3$ and $H$ is one of the groups in Table 15.1 of \cite{Habil}. \\
In this case we have $|P| = 3^5 = 243$. Then Proposition 15.12 in \cite{Habil} leads to a contradiction.

(viii) $n=6$, $p=3$ and $H \cong \mathrm{SL}_2(13)$. \\
In this case we have $|P| = 3^7 = 2187$. However, one can check that $P$ has exponent 9 in this case, a contradiction.
\end{proof}

\begin{pro}\label{indec}
Let $P$ be a nonabelian $p$-group with a transitive fusion system. Then $P$ is indecomposable (as a direct product).
\end{pro}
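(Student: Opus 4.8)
The plan is to argue by contradiction: suppose $P = Q \times R$ with $Q \neq 1 \neq R$. Since $P$ is nonabelian we have $p > 2$ and $\exp(P) = p$, and without loss of generality $Q$ is nonabelian. The first thing I would record is the consequence for the centre: $\mathrm{Z}(P) = \mathrm{Z}(Q) \times \mathrm{Z}(R)$, so $m := \dim_{\mathbb{F}_p} \mathrm{Z}(P) \geq 2$. The crucial observation is that, because $P$ is its own $\mathcal{F}$-normaliser and hence fully $\mathcal{F}$-automised, $\mathrm{Inn}(P) = \mathrm{Aut}_P(P)$ is a Sylow $p$-subgroup of $\mathrm{Aut}_\mathcal{F}(P)$; since inner automorphisms centralise $\mathrm{Z}(P)$, the image $\overline{H}$ of $\mathrm{Aut}_\mathcal{F}(P)$ in $\mathrm{GL}(\mathrm{Z}(P)) = \mathrm{GL}_m(p)$ has order prime to $p$. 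On the other hand, by transitivity $\mathrm{Aut}_\mathcal{F}(P)$ acts transitively on $\mathrm{Z}(P) \setminus \{1\}$, so $\overline{H}$ is a transitive linear group of $p'$-order on $\mathbb{F}_p^m$ with $m \geq 2$.

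Next I would feed $\overline{H}$ into Hering's classification exactly as in the proof of Proposition \ref{indexp}, now exploiting the $p'$-condition: the cases of Theorem 15.1 of \cite{Habil} whose normal subgroup ($\mathrm{SL}_k$ with $k \geq 2$, $\mathrm{Sp}_k$, $G_2$, or $\mathrm{SL}_2(13)$) has order divisible by $p$ are excluded outright, leaving $\overline{H} \subseteq \Gamma\mathrm{L}_1(p^m)$ together with the handful of exceptional transitive $p'$-groups of degree $2$ or $4$ that occur for small primes. In the generic case $\mathrm{Z}(P)$ becomes the field $\mathbb{F}_{p^m}$ with $\overline{H}$ acting semilinearly and transitively, and I would then try to propagate the same mechanism to the other characteristic section on which $\mathrm{Inn}(P)$ acts trivially, namely $P/\Phi(P) = P/P'$, and play the resulting $p'$-linear constraints against the commutator pairing $P/P' \times P/P' \to P'$ attached to the decomposition (equivalently, after a Krull--Schmidt reduction to $P \cong Q_0^{\,k}$ with $Q_0$ nonabelian indecomposable and $k \geq 2$, against the block structure of that pairing). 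The goal is to show that a transitive one-dimensional action on $\mathrm{Z}(P)$ cannot coexist with the rigid product structure of the pairing unless $m = 1$, contradicting $m \geq 2$, and to dispatch the few exceptional degrees by hand.

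The step I expect to be the main obstacle is precisely this last one. What makes it delicate is that none of the soft invariants survive fusion: $\mathcal{F}$-conjugacy of elements is genuinely weaker than $\mathrm{Aut}_\mathcal{F}(P)$-conjugacy, it does not descend to the automisers $\mathrm{Aut}_\mathcal{F}(E)$ of proper subgroups, and --- as the allowed extraspecial group of order $p^3$ already shows --- it does not even preserve the orders of the centralisers $\mathrm{C}_P(x)$. Thus any argument that tries to compare centraliser sizes of the three types of cyclic subgroups inside a rank-$2$ subgroup straddling two factors is bound to fail, and the contradiction must instead be extracted from the interaction of the saturation axioms with Hering's list at a \emph{fully normalised} subgroup, supplemented by a direct check of the finitely many small configurations left over (the case of an abelian subgroup of index $p$ being already disposed of by Proposition \ref{indexp}).
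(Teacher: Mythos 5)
Your reduction to a transitive $p'$-subgroup $\overline{H}$ of $\mathrm{GL}(\mathrm{Z}(P))$ is correct as far as it goes, but it carries no contradiction by itself: a Singer cycle is a transitive $p'$-linear group in every dimension, so at that point nothing about the direct decomposition has actually been used. The entire burden of the argument is thus shifted onto the final step, which you explicitly leave open as ``the main obstacle'' --- extracting a contradiction from the interaction of Hering's list with the commutator pairing on $P/P'$. As written this is a plan rather than a proof, and I do not see how that route closes: the relevant constraint is not which abstract transitive linear groups exist on $\mathrm{Z}(P)$, but which of them can be \emph{realised by automorphisms of a decomposable} $P$, and Hering's classification says nothing about that.

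The missing idea --- the one the paper's proof turns on --- is elementary and avoids Hering entirely. Write $P=N_1\times\cdots\times N_k$ with the $N_i$ indecomposable and $k\geq 2$. Transitivity forces $\mathrm{Z}(P)\subseteq P'$, hence $\mathrm{Z}(N_i)\subseteq N_i'$ for every $i$. A normal (central) automorphism $\beta$ of $P$ satisfies $\beta(g)=gz_g$ with $g\mapsto z_g$ a homomorphism $P\to\mathrm{Z}(P)$; such a homomorphism vanishes on $P'$, hence on $\mathrm{Z}(P)$, so $\beta$ fixes $\mathrm{Z}(P)$ pointwise. By Krull--Remak--Schmidt, any $\alpha\in\mathrm{Aut}(P)$ satisfies $\alpha(N_1)=\beta(N_i)$ for some $i$ and some normal automorphism $\beta$, whence $\alpha(\mathrm{Z}(N_1))=\beta(\mathrm{Z}(N_i))=\mathrm{Z}(N_i)$. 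Thus every automorphism of $P$ maps the set $\bigl(\bigcup_i\mathrm{Z}(N_i)\bigr)\setminus\{1\}$ into itself; since $k\geq 2$ and each $\mathrm{Z}(N_i)\neq 1$, this is a proper nonempty invariant subset of $\mathrm{Z}(P)\setminus\{1\}$, contradicting the transitivity of $\mathrm{Aut}_{\mathcal F}(P)$ on $\mathrm{Z}(P)\setminus\{1\}$. In the language of your setup: the image of $\mathrm{Aut}(P)$ in $\mathrm{GL}(\mathrm{Z}(P))$ is block-monomial with respect to $\mathrm{Z}(P)=\prod_i\mathrm{Z}(N_i)$, and such a group is never transitive on nonzero vectors when there are at least two blocks. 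This is the step your proposal needs and does not supply.
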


\begin{proof}
Let $P = N_1 \times \cdots \times N_k$ be a decomposition into indecomposable factors $N_i\ne 1$. Assume by way of contradiction that $k \geq 2$. Since $P$ carries a transitive fusion system we
have
$$\mathrm{Z}(N_1) \times \cdots \times \mathrm{Z}(N_k) = \mathrm{Z}(P) \subseteq P' = N_1' \times \cdots \times N_k'.$$
Let $1 \neq x \in \mathrm{Z}(N_1)$. By hypothesis there exists $\alpha \in \mathrm{Aut}(P)$ such that $\alpha(x) \in \mathrm{Z}(P) \setminus (\mathrm{Z}(N_1) \cup \ldots \cup \mathrm{Z}
(N_k))$. By the Krull-Remak-Schmidt Theorem (see Satz I.12.5 in \cite{H1}) there is a normal automorphism $\beta$ of $P$ such that $\beta(N_i) = \alpha(N_1)$ for some $i \in \{1,\ldots,k\}$.
In particular, there is $y \in \mathrm{Z}(N_i)$ such that $\beta(y) = \alpha(x)$. By Hilfssatz I.10.3 in \cite{H1}, for every $g \in P$ there is a $z_g \in \mathrm{Z}(P)$ such that 
$\beta(g) = gz_g$. Obviously the map $P \longrightarrow \mathrm{Z}(P)$, $g \longmapsto z_g$, is a homomorphism. Since $\mathrm{Z}(N_i) \subseteq N_i'$, we obtain $z_y = 1$. This gives
the contradiction $\alpha(x) = \beta(y) = y \in \mathrm{Z}(N_i)$. 
\end{proof}


\begin{pro}\label{wreathprod}
Let $P = \prod_{i=1}^\infty P_i^{a_i}$ where $P_i = C_{p^{r_i}} \wr C_p \wr \ldots \wr C_p$ ($i$ factors in the wreath product) and $a_i \in \mathbb{N}_0$, $r_i \in \mathbb{N}$ for $i \in \mathbb{N}$. 
Moreover, let $U$ be a normal subgroup of $P$ such that $P/U$ is cyclic, and let $Z$ be a cyclic subgroup of $\mathrm{Z}(U)$. Suppose that $R := U/Z$ supports a transitive fusion system. Then $R$ has order $p^3$ or is
elementary abelian.
\end{pro}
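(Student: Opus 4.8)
The plan is to exploit that a transitive fusion system forces exponent at most $p$, so that $\exp(R) \le p$; if $R$ is abelian this already makes $R$ elementary abelian and we are done. Hence I would assume $R$ is nonabelian and aim to prove $|R| = p^3$. The engine of the whole argument is the following observation: since $\exp(U/Z) \le p$, every $p$-th power of an element of $U$ lies in $Z$, so the agemo subgroup $\mho^1(U) = \langle u^p : u \in U\rangle$ is contained in the cyclic group $Z$ and is therefore itself \emph{cyclic}. Because $P/U$ is cyclic we have $P' \le U$, and hence $\mho^1(P') \le \mho^1(U)$ is cyclic as well. This single cyclicity constraint, read against the rigid structure of the direct factors $P_i$, is what pins down $P$.

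The main structural step is to compute $\mho^1(P_i')$ for the iterated wreath products and to show it is non-cyclic except in the smallest case. For $P_2 = C_{p^{r_2}} \wr C_p$ the derived subgroup is the augmentation subgroup of the base, isomorphic to $C_{p^{r_2}}^{\,p-1}$, whose agemo $C_{p^{r_2-1}}^{\,p-1}$ has rank $p-1 \ge 2$ as soon as $r_2 \ge 2$; for $i \ge 3$ the factor $P_i$ is itself a wreath product $Q \wr C_p$ with $Q$ nonabelian of exponent $\ge p^2$, and $P_i'$ contains elements of order $p^2$ supported in distinct wreath coordinates whose $p$-th powers are independent central elements, so $\mho^1(P_i')$ again has rank $\ge 2$. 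Since each $P_i'$ is a direct factor of $P'$, non-cyclicity of any of these would contradict cyclicity of $\mho^1(P') \le \mho^1(U)$. I would conclude that $a_i = 0$ for all $i \ge 3$ and that $r_2 = 1$ whenever $a_2 > 0$, so that $P \cong C_{p^{r_1}}^{\,a_1} \times (C_p \wr C_p)^{\,a_2}$. A further application of the same principle bounds $a_2$: the centres of the $C_p \wr C_p$-factors are precisely their agemo subgroups and are independent in $P$, so a normal $U$ with cyclic quotient that captures the ``top'' of three or more of these factors would force $\mho^1(U)$ to contain an elementary abelian group of rank $\ge 2$; hence at most two wreath factors survive.

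With $P$ reduced to $C_{p^{r_1}}^{\,a_1} \times (C_p \wr C_p)^{\,a_2}$ and $a_2 \in \{1,2\}$ (nonabelianity of $R$ forcing $a_2 \ge 1$), I would produce an abelian subgroup of index $p$ in $R$. The base group of each wreath factor is an abelian normal subgroup, and together with the abelian direct factors $C_{p^{r_1}}^{\,a_1}$ it gives an abelian normal subgroup $A$ of $P$; intersecting with $U$ and passing to $R = U/Z$, the cyclicity of $\mho^1(U)$ forces all but one wreath top to act trivially on $U$ modulo $Z$, so the image of $A$ in $R$ is abelian of index exactly $p$ (index $1$ would make $R$ abelian). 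At this point Proposition~\ref{indexp} finishes the proof: if $|R| \ge p^4$ it would force $R$ to be abelian, contradicting our assumption, so $|R| \le p^3$, and nonabelianity gives $|R| = p^3$.

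I expect the main obstacle to be the two structural computations of the second paragraph --- determining $\mho^1$ of the derived subgroups of the iterated wreath products precisely enough to eliminate the cases $i \ge 3$ and $r_2 \ge 2$, and controlling how $U$ and $Z$ slice through the base and top of the surviving wreath factors. In particular the two-factor case $a_2 = 2$ is delicate: one must check that the cyclicity of $\mho^1(U)$ genuinely prevents both wreath tops from being ``active'' simultaneously, so that the abelian subgroup coming from the base groups really has index $p$ rather than $p^2$ in $R$.
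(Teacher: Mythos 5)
Your opening reductions are sound and essentially coincide with the paper's: transitivity gives $\exp(R)\le p$, hence $\mho^1(U)\le Z$ is cyclic, and since $P'\le U$ this eliminates $r_j\ge 2$, all factors $P_i$ with $i\ge 3$, and (with a bit more care than you indicate: one needs the maximality of $\pi(U)$ and a kernel argument to put a full copy of $(C_p\wr C_p)^2$ inside $\pi(U)$, since ``capturing the top'' of a factor does not by itself produce an element of order $p^2$ there) the case $a_2\ge 3$. The case $a_2\le 1$ also works as you describe, via an abelian subgroup of index $p$ and Proposition~\ref{indexp}. The genuine gap is the case $a_2=2$, which is where the paper spends most of its effort. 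Your key assertion --- that cyclicity of $\mho^1(U)$ forces all but one wreath top to act trivially on $U$ modulo $Z$, so that the image of the base subgroup has index exactly $p$ in $R$ --- is false. Take $P=W_1\times W_2$ with $W_i=C_p\wr C_p$, let $M\le W_2$ be the nonabelian maximal subgroup of exponent $p$ (the second member of the union in Aufgabe III.15.36 of \cite{H1}), and set $U=W_1\times M$ and $Z=\mathrm{Z}(W_1)\times 1$. Then $P/U$ is cyclic and $\mho^1(U)=Z$ is cyclic, yet both tops act nontrivially on $U$ modulo $Z$, the abelian subgroup coming from the two base groups meets $U$ in a subgroup whose image has index $p^2$ in $R$, and in fact $R\cong (W_1/\mathrm{Z}(W_1))\times M$ is a direct product of two nonabelian groups and therefore has \emph{no} abelian subgroup of index $p$ at all. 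So Proposition~\ref{indexp} cannot close this configuration; it is excluded only by indecomposability (Proposition~\ref{indec}), which your argument never invokes.

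The complementary half of the $a_2=2$ case --- showing that $U$ cannot project onto both wreath factors simultaneously --- is precisely the point you defer to ``one must check,'' and it is the hardest step of the whole proof. The paper first removes the abelian direct factor $C_{p^{r_1}}^{a_1}$ using the full transitivity condition $\mathrm{Z}(R)\subseteq R'$ (not merely $\exp(R)\le p$), and then uses the fact that the elements of order at most $p$ in $C_p\wr C_p$ form a union of two maximal subgroups to produce elements $x,y'\in U$ of order $p^2$ whose $p$-th powers generate a noncyclic subgroup of $Z$. Neither ingredient appears in your outline, so as written the proposal establishes the proposition only when $a_2\le 1$.
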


\begin{proof}
We assume the contrary. Then $|R| \geq p^4$ and $p>2$.

Suppose first that $r_j>1$ for some $j>1$. Since $p>2$, $P'$ contains a subgroup isomorphic to $C_{p^{r_j}} \times C_{p^{r_j}}$. Since $P' \subseteq U$ we conclude that $\exp(R) \geq p^2$, a
contradiction. 

Thus $r_j = 1$ for $j>1$, and $P_j$ is the iterated wreath product of $j$ copies of $C_p$ in this case.

Suppose next that $a_j > 0$ for some $j \geq 3$. Since $p>2$, $P'$ contains a subgroup isomorphic to $P_{j-1} \times P_{j-1}$. 
By Satz III.15.3 in \cite{H1}, $P_{j-1}$ has exponent $p^{j-1} \geq p^2$. Since $P' \subseteq U$ we conclude that $\exp(R) \geq p^2$, a contradiction again.

Thus $P = P_1^{a_1} \times P_2^{a_2}$ where $P_1 = C_{p^{r_1}}$ and $P_2 = C_p \wr C_p$. If $a_2 \leq 1$ then $P$ and $R$ contain abelian subgroups of index $p$. In this case Proposition 2.2
gives a contradiction. 

Hence we may assume that $a_2 \geq 2$. Let $\pi: P \longrightarrow P_2^{a_2}$ be the relevant projection. Since $\exp(P_2) = p^2$ we cannot have $\pi(U) = P_2^{a_2}$. On the other hand, $P_2/P_2'$
is elementary abelian. Since $P_2^{a_2}/\pi(U)$ is cyclic, $\pi(U)$ is a maximal subgroup of $P_2^{a_2}$. Let $\pi_1: P_2^{a_2} \longrightarrow P_2^{a_2-1}$ be the projection onto the direct
product of the first $a_2-1$ copies of $P_2$, and let $\pi_2: P_2^{a_2} \longrightarrow P_2^{a_2-1}$ be the projection onto the direct product of the last $a_2-1$ copies of $P_2$. 

Now suppose that $a_2 \geq 3$. Then an argument similar to the one above shows that $\pi_1(\pi(U))$ is a maximal subgroup of $P_2^{a_2-1} = \pi_1(P_2^{a_2})$. Thus $\mathrm{Ker}(\pi_1) \subseteq
\pi(U)$ and, similarly, $\mathrm{Ker}(\pi_2) \subseteq \pi(U)$. Thus $\pi(U)$ contains a subgroup isomorphic to $P_2^2$. Hence $\exp(R) \geq p^2$, a contradiction. 

We are left with the case $a_2 = 2$, i.e. $P = A \times P_2 \times P_2$ where $A = P_1^{a_1} \cong C_{p^{r_1}}^{a_1}$ is abelian. Since $\pi(U)$ is a maximal subgroup of $P_2 \times P_2$, we see that
$A \times \pi(U)$ is a maximal subgroup of $P$. Let $x \in P$ such that $P = U \langle x \rangle$. Then $U \langle x^p \rangle \subseteq A \times \pi(U)$. Since $|P:U\langle x \rangle| \leq p$ we
conclude that $U\langle x^p \rangle = A \times \pi(U)$. Note that $x^p \in \mho(P) \subseteq \mathrm{Z}(P)$.

Suppose that $\exp(A) > p$, and choose an element $a \in A$ of maximal order. We write $x = x_1x_2$ with $x_1 \in A$ and $x_2 \in P_2^2$, we write $a = ux^{pi}$ with $u \in U$ and $i \in 
\mathbb{Z}$, and we write $u = u_1u_2$ with $u_1 \in A$ and $u_2 \in P_2^2$. Then $a^p = u^px^{p^2i} = u_1^p x_1^{p^2i} u_2^px_2^{p^2i} = u_1^px_1^{p^2i} u_2^p$. We conclude that $u_2^p = 1$
and $a^p = u_1^p x_1^{p^2i}$. Thus $p < \exp(A) = |\langle a \rangle | = |\langle u_1 \rangle | = |\langle u \rangle|$, and $1 \neq u^p \in \mho(U) \cap A$. 

By Aufgabe III.15.36 in \cite{H1}, the elements of order $1$ or $p$ form a union of two maximal subgroups. Thus $P_2^2$ contains $p^{2p-2}(2p-1)^2 < p^{2p+1}$ elements of order $1$ or $p$. 
Hence $\pi(U)$ contains elements of order $p^2$; in particular, $\mho(U)$ is noncyclic. Since $\mho(U) \subseteq Z$, this is a contradiction. 

This contradiction shows that $\exp(A) \leq p$, i.e. $P = A \times P_2 \times P_2$ where $A$ is elementary abelian. Hence $P/P'$ is elementary abelian. Since $P/U$ is cyclic we conclude that $U$
is a maximal subgroup of $P$. Thus $U = A \times \pi(U)$ and $\mho(U) \subseteq \pi(U)$. Since $\pi(U)$ contains elements of order $p^2$, we have $1 \neq \mho(U) \subseteq Z$. On the other hand,
Satz III.15.4 in \cite{H1} implies that $\mathrm{Z}(U)$ is elementary abelian. Thus $|Z| = p$ and $Z = \mho(U) \subseteq \pi(U)$. Since $R$ supports a transitive fusion system we have
$$AZ/Z \subseteq \mathrm{Z}(U)/Z \subseteq \mathrm{Z}(R) \subseteq R' = U'Z/Z = \pi(U)'Z/Z \subseteq \pi(U)/Z.$$
Therefore $A=1$, i.e. $P = P_2 \times P_2$. Recall that $U$ is a maximal subgroup of $P$ and that $\pi_1, \pi_2: P \longrightarrow P_2$ denote the two projections. Without loss of generality we
have $\pi_1(U) = P_2$. Since $\mho(U) $ is cyclic, $K_1 := \mathrm{Ker}(\pi_1)$ has order $p^p$ and exponent $p$. 

If $\pi_2(U) \neq P_2$ then $U = P_2 \times \pi_2(U)$ and $\exp(\pi_2(U)) = p$. Thus $Z = \mho(U) \subseteq P_2 \times 1$ and $R \cong P_2/Z \times \pi_2(U)$, a contradiction to Proposition 2.2.

Thus we must also have $\pi_2(U) = P_2$. Then also $K_2 := U \cap \mathrm{Ker}(\pi_2)$ has order $p^p$ and exponent $p$. Moreover, we have $K_1 \times K_2 \subseteq U$.

We may choose elements $x,y \in U$ such that $\pi_1(x)$ and $\pi_2(x)$ have order $p^2$. Since $\langle x^p \rangle = Z = \langle y^p \rangle$ we see that $\pi_2(x)$ and $\pi_1(y)$ have order
$p^2$. However, we may choose $y$ such that $yK_1$ contains an element $y'$ such that $\pi_2(y')$ has order $p$. Since $\pi_1(y) = \pi_1(y')$ still has order $p^2$, we have a final contradiction.
\end{proof}

\section{Blocks}

We now present the proof of Theorem~\ref{mainthm}.

\begin{proof} 
Suppose that the result is false. Then $P$ is nonabelian with $|P| \geq p^4$ and $p>2$.


By \cite[Proposition IV.6.3]{AKO} we may assume that $B$ is quasiprimitive. This means that, for any normal subgroup $H$ of $G$, $B$ covers a unique $p$-block of $H$.

Now let $H$ be a normal subgroup of $G$, and let $b$ be the unique $p$-block of $H$ covered by $B$. Suppose that $P \cap H = 1$. (This is satisfied, for example, whenever $H$ is a $p'$-subgroup.) Then $b$ has defect 
zero. By Clifford theory, there exist a finite group $G^\ast$, a central $p'$-subgroup $H^\ast$ of $G^\ast$, and a $p$-block $B^\ast$ of $G^\ast$ with defect group $P^\ast \cong P$ such that $\mathcal{F}_{P^\ast}(B^\ast)$ is
equivalent to $\mathcal{F}$. Thus we may replace $G$ by $G^\ast$ and $B$ by $B^\ast$. 

Repeating the argument above we may therefore assume that every normal subgroup $H$ of $G$ with $P \cap H = 1$ is central. In particular, we have $\mathrm{O}_{p'}(G) \subseteq \mathrm{Z}(G)$. 

It is well-known that $M := \mathrm{O}_p(G) \subseteq P$. Suppose first that $M \neq 1$. Since $\mathcal{F}$ is transitive this implies $M = P$. Then $\Phi(P)$ is a normal subgroup of $G$ and properly contained in $P$. Since $\mathcal{F}$ is
transitive, we must have $\Phi(P) = 1$. Thus $P$ is elementary abelian in this case.

Hence, in the following, we may assume that $\mathrm{O}_p(G) = 1$. Then $\mathrm{F}(G) = \mathrm{O}_{p'}(G) = \mathrm{Z}(G)$. 
Moreover, the layer $\mathrm{E}(G)$ is nontrivial. Let $b$ be the unique $p$-block of $\mathrm{E}(G)$ covered by $B$. 
Then $b$ has defect group $P \cap \mathrm{E}(G) \neq 1$. Since $B$ is transitive, this implies that $P \subseteq \mathrm{E}(G)$.

Let $L_1,\ldots, L_n$ denote the components of $G$. Then $\mathrm{E}(G) = L_1 \ast \cdots \ast L_n$ is a central product. For
$i=1,\ldots,n$, the unique $p$-block $b_i$ of $L_i$ covered by $b$ has defect group $P_i := P \cap L_i \neq 1$. Moreover, we
have $P = P_1 \times \cdots \times P_n$. Since $\mathcal{F}$ is transitive, this implies that $n = 1$. Thus $\mathrm{E}(G) = L_1 =: L$
is quasisimple, and $G/\mathrm{Z}(G)$ is isomorphic to a subgroup of $\Aut(L)$. 


If $|P| = p^4$ then Proposition 15.14 in \cite{Habil} gives a contradiction. Thus we may assume that $|P| \geq p^5$; in particular,
$|L|$ is divisible by $p^5$. If $P$ is a Sylow $p$-subgroup of $G$ then the results of \cite{KNST} imply our theorem. 
Hence we may assume that $|G|$ is divisible by $p^6$.

We now make use of the classification of the finite simple groups and discuss the various possibilities for the simple group
$\mathrm{F}^\ast(G)/\mathrm{Z}(G) \cong L/\mathrm{Z}(L)$. Since $\mathcal{F}$ is transitive we have $\mathrm{C}_L(u) \cong \mathrm{C}_L(v)$ for
any $u,v \in P \setminus \{1\}$. This will be a very useful fact.


It can be checked with GAP \cite{GAP} that $L/\mathrm{Z}(L)$ cannot be a sporadic simple group. Similarly, $L/\mathrm{Z}(L)$ cannot be a simple group with an exceptional Schur multiplier.

Suppose that $L = \mathfrak{A}_n$ is an alternating group. Then $P$ is a defect group of a $p$-block of $\mathfrak{A}_n$. Hence $P$ is
also a defect group of a $p$-block of the symmetric group $\mathfrak{S}_n$. Thus $P$ is a direct product of (iterated) wreath
products of groups of order $p$. Since $C_p \wr C_p$ has exponent $p^2$ we conclude that $P$ is a direct product of groups of
order $p$, and the result follows in this case.

Suppose next that $L = \hat{\mathfrak{A}}_n$ is the $2$-fold cover of $\mathfrak{A}_n$. We may assume that $b$ is a faithful block of $\hat{\mathfrak{A}}_n$. In this case the defect groups
of $b$ have a similar structure as those in $\mathfrak{A}_n$ (cf. \cite[Theorem~5.8.8]{NaTs}), so we are done here by the same argument. 

Suppose now that $L/\mathrm{Z}(L)$ is a group of Lie type in characteristic $p$. Then the $p$-block $b$ of $L$ has full defect, i.e.
$P$ is a Sylow $p$-subgroup of $L$. Since $\mathcal{F}$ is transitive, every nontrivial element $u \in P$ is conjugate in $G$ to an element
$v \in \mathrm{Z}(P)$. Thus $|L:\mathrm{C}_L(u)| = |L:\mathrm{C}_L(v)|$ is not divisible by $p$. Therefore the results in \cite{NT}
imply that $P$ is abelian.

Finally suppose that $L/\mathrm{Z}(L)$ is a group of Lie type in characteristic $r \neq p$. 
First we deal with the exceptional groups of Lie type. Let $S\in\Syl_p(L)$. 
By \S 10.1 in \cite{GL}, $S$ contains an abelian normal subgroup $N$ such that $S/N$ is isomorphic to a subgroup of the Weyl group of $L/\mathrm{Z}(L)$. 
If $|S/N|\le p$, then Proposition~\ref{indexp} gives a contradiction. 
This already implies the claim for $p\ge 7$. Now let $p=5$. Then by the same argument we may assume that $L/\mathrm{Z}(L)\cong E_8(q)$ where $q\equiv\pm1\pmod{5}$. This case will be handled in Section~\ref{p5}.
Now let $p=3$. Here we need to discuss the following groups: $F_4$, $E_6$, $^2E_6$, $E_7$ and $E_8$. 
For $L/\mathrm{Z}(L)\cong F_4(q)$ we have $|P|\le p^6$ and the result follows by Proposition~15.13 in \cite{Habil}. 
The remaining cases will be discussed in Section~\ref{p5}.

We may therefore assume that $L/\mathrm{Z}(L)$ is a classical group. In this case our theorem follows from the results of the next section.
\end{proof}

\section{Classical Groups in non-describing characteristic}

We keep the notation of the previous section.
We suppose in this section that $L/\mathrm{Z}(L)$ is a simple group of Lie type in characteristic $r$, $r\ne p $. 
Let $q$ be a power of $r$. Suppose that $ L= \bL^F/ Z$, 
where $\bL$ is a simple simply connected algebraic group defined over an algebraic closure $\bar {\mathbb F}_q$ of a field $\mathbb{F}_q$ of $q$ elements, 
$ F: \bL \to \bL $ a Frobenius morphism with respect to an ${\mathbb F}_q$-structure on $\bL $ and $ Z $ is a central subgroup of $\bL^F $. Note that by the classification of finite simple groups, we may assume that if $q$ is a power of $2$, then $\bL $ is not of type $C_n$.
Let $\tilde b$ be the block of $\bL^F$ dominating $b$ and $\tilde P$ be a defect group of $\tilde b$ such that $\tilde P Z/Z= P$.

We define groups $\bH$ as follows. If $L/\mathrm{Z}(L) = B_n(q)$, then $\bH= \mathrm{SO}_{2n+1}( \bar{\mathbb F}_q) $. If $L/\mathrm{Z}(L)= C_n(q) $, then $\bH=\mathrm{Sp}_{2n}(\bar {\mathbb F}_q)$. 
If $L/\mathrm{Z}(L)= D_n^{\pm}(q)$, then 
$\bH= \mathrm{SO}_{2n}(\bar{\mathbb F}_q )$. Here, if $q$ is a power of $2$, and $\bL $ is of type $B_n$, then by $\mathrm{SO}_{2n}(\bar{\mathbb F}_q )$ we mean the adjoint simple group of type $B_n$. If $q$ is a power of $2$ and if $\bL $ is of type $D_n $, then by $\mathrm{SO}_{2n}(\bar{\mathbb F}_q )$ we mean the simple algebraic group of type $D_n$ corresponding to the root datum $(X, \Phi, Y, \Phi^{\vee} ) $ for which the fundamental roots are $ e_1-e_2, e_2-e_3, \ldots, e_{n-1}-e_{n}, e_{n-1}+e_{n} $ and $X = \{ \sum_{i=1}^n a_ie_i \, : a_i \in {\mathbb Z} \} $ for an orthonormal basis, $ e_1, e_2, \cdots, e_n $, of $n$-dimensional Euclidean space. We may and will assume that $\bH$ is an $F$-stable quotient of $\bL$.

\begin{pro} \label{classical} Suppose that $p$ is an odd prime and $L/\mathrm{Z}(L)$ is a classical group in non-describing characteristic different from triality $D_4$. Suppose that $B$ is a fusion-transitive block with $P$ of order at least $p^5 $. Then $P$ is abelian.
\end{pro}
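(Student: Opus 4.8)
The plan is to turn transitivity of $\mathcal{F}$ into the statement that every nontrivial $p$-element of the defect group has, up to isomorphism, the same centraliser in the ambient algebraic group, and then to contradict this by exhibiting two elements whose centralisers have genuinely different Dynkin diagrams. I would split the argument according to whether $L/\mathrm{Z}(L)$ is orthogonal/symplectic or linear/unitary, disposing of the linear and unitary cases by a direct reduction to Proposition~\ref{wreathprod}.

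For the linear case $\bL=\mathrm{SL}_n(\overline{\mathbb F}_q)$, $L=\mathrm{SL}_n(q)$, I would pick a block of $\mathrm{GL}_n(q)$ covering $\tilde b$ with defect group $D$ satisfying $\tilde P=D\cap\mathrm{SL}_n(q)$. The Fong--Srinivasan description of blocks of finite general linear groups \cite{FS} identifies $D$ with a Sylow $p$-subgroup of a direct product of general linear groups over finite extensions of $\mathbb F_q$, and such a Sylow subgroup is precisely a direct product of iterated wreath products of cyclic $p$-groups. Identifying $D$ with the wreath-product group of Proposition~\ref{wreathprod}, $\tilde P$ with its normal subgroup $U$ (note $D/\tilde P\hookrightarrow\mathrm{GL}_n(q)/\mathrm{SL}_n(q)\cong\mathbb F_q^\times$ is cyclic), and $Z$ with the cyclic central subgroup, our $P=\tilde P/Z$ plays the role of $R$; since $\mathrm{Z}(L)$ is cyclic and $|P|\geq p^5>p^3$, Proposition~\ref{wreathprod} forces $P$ to be elementary abelian. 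The unitary case is identical using the corresponding Fong--Srinivasan statement.

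The substantive case is the orthogonal/symplectic one, where I would work with the associated classical group $\bH$. Because $p$ is odd and $Z$ is a $2$-group, the canonical map $\tilde P\to P$ is an isomorphism, so $P$ has exponent $p$ and its elements are semisimple; transitivity then transports to $\tilde P$. Using that every automorphism of $L$ lifts to one of $\bL^F$ (cf.\ \cite{GLS3}) and that, in Steinberg's factorisation, inner, diagonal and graph automorphisms restrict from algebraic automorphisms of $\bL$ while field automorphisms preserve Dynkin types, I would conclude that all nontrivial $z\in\tilde P$ have isomorphic Dynkin diagrams $\mathrm{C}_{\bL}(z)$, and that these descend to isomorphic Dynkin diagrams $\mathrm{C}_{\bH}(\bar z)$. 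Next I would invoke \cite{CE} to produce an $F$-stable Levi subgroup $\bM$ of $\bL$ with $\mathrm{Z}(\bM)^F_p$ self-centralising and normal in $\tilde P$. If $P$ were nonabelian, then since the automorphism group of an elementary abelian group of order at most $p^2$ has cyclic Sylow $p$-subgroups of order at most $p$, the self-centralising condition together with $|\tilde P|\geq p^5$ forces $\mathrm{Z}(\bM)^F_p$ to have rank at least $3$.

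The heart of the argument, and the step I expect to be the main obstacle, is to show that such a Levi cannot have all its central $p$-elements conjugate. I would compute centralisers in $\bH$ through eigenvalue multiplicities on the natural module $V$: the Dynkin diagram of $\mathrm{C}_{\bH}(z)$ has at most one non-type-$A$ component, whose rank is governed by the multiplicity $n_0$ of the eigenvalue $1$. Writing $\mathrm{Z}(\bM)^F_p$ as a product of the cyclic centres of at least three twisted general linear/unitary factors, I would choose $z_1$ supported on a single factor, so that $n_0$ is large and $\mathrm{C}_{\bH}(z_1)$ carries a sizeable $B$-, $C$- or $D$-component, and then $z_1z_2$ supported on two factors, strictly lowering $n_0$ and hence the rank of that component. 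This yields two nontrivial $p$-elements of $\tilde P$ with non-isomorphic centraliser Dynkin diagrams, contradicting transitivity. The delicate points I anticipate are the precise eigenvalue/rank bookkeeping in each type (in particular the small-rank $D_n$ exceptions, where $n=4$ must be treated separately and the exclusion of triality $D_4$ is used), the correct handling of the isogeny $\bL\to\bH$ and the characteristic-$2$ conventions fixed above, and verifying that the natural projection induces an isomorphism $\mathrm{Z}(\bM)^F_p\to\mathrm{Z}(\bar{\bM})^F_p$ carrying $\mathrm{C}_{\bL}(z)$ onto $\mathrm{C}_{\bH}(\bar z)$.
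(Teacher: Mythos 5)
Your treatment of the linear and unitary cases coincides with the paper's: pass to a covering block of $\mathrm{GL}_n(q)$, invoke Fong--Srinivasan \cite[Theorem~(3C)]{FS} to identify $D$ with a Sylow $p$-subgroup of a product of general linear groups (hence a product of iterated wreath products), and feed $D$, $\tilde P$, $Z$ into Proposition~\ref{wreathprod}. For the orthogonal and symplectic types, however, you take a genuinely different route from the published proof. The paper does \emph{not} compare Dynkin diagrams of centralisers of distinct elements of $\tilde P$; instead it fixes a single $1\ne z\in\mathrm{Z}(\tilde P)$, uses that $\mathrm{C}_{\bL}(z)$ is a Levi subgroup (as $p$ is odd), pushes down the isogeny to write $\mathrm{C}_{\bH}(\bar z)=\bH_0\times\cdots\times\bH_r$ with $\bH_0$ classical and the $\bH_i$ ($i\ge1$) products of general linear factors, shows $\tilde P$ decomposes as a direct product of defect groups of the corresponding $\bL_i^F$, and then applies the indecomposability result (Proposition~\ref{indec}) to force $\tilde P$ into a single general linear or unitary factor $\bH_i^F$ with $i\ge 1$ (since $z$ is central in its own centraliser, $\bH_0$ is excluded). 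This reduces type $B$/$C$/$D$ to the \emph{same} Fong--Srinivasan plus Proposition~\ref{wreathprod} endgame as type $A$, with no eigenvalue bookkeeping, no transitive linear group input via a self-centralising $\mathrm{Z}(\bM)^F_p$, and no case analysis on ranks. What your approach buys is a more direct link between transitivity and the geometry of semisimple centralisers; what the paper's approach buys is uniformity across all classical types and independence from the fine structure of semisimple centralisers.

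There is a concrete gap in your version that you flag but do not close, and it is the likely reason the paper avoids this route: the description of $\mathrm{C}_{\bH}(z)$ via eigenvalue multiplicities on the natural module (one non-type-$A$ component of type $B_{(n_0-1)/2}$, $C_{n_0/2}$ or $D_{n_0/2}$) rests on the Fong--Srinivasan analysis of semisimple centralisers, which in the cited form is established only for odd $q$. Types $B_n$ and $D_n$ with $q$ even are in scope (only type $C_n$ with $q$ even is excluded by the classification), and there the group $\bH$ is an adjoint or non-standard isogeny type for which the eigenvalue/component dictionary and the ``twisted diagonally embedded'' description of the Levi $\bM$ both require separate justification. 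Until the even-$q$ case of that centraliser description (and of the self-centralising-Levi input from \cite{CE}) is supplied, the orthogonal case of your argument is incomplete; the paper's decomposition-plus-indecomposability argument sidesteps this entirely because it only needs the coarse product structure of $\mathrm{C}_{\bH}(\bar z)$, which follows from the root datum in all characteristics.
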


\begin{proof} Suppose that $ L/\mathrm{Z}(L)$ is the projective special linear group $\mathrm{PSL}_n(q)$, so $\bL =\mathrm{SL}_n(\overline{\mathbb F}_q)$ and $ L = \mathrm{SL}_n(q)$.
Let $D$ be a defect group of a block of $\mathrm{GL}_n(q)$ covering $\tilde b $ such that 
$\tilde P = D \cap \mathrm{SL}_n(q)$. By the results of Fong and Srinivasan on blocks of finite general linear groups \cite[Theorem~(3C)]{FS}, $D$ is isomorphic to the Sylow $p$-subgroup of a direct product of general linear groups over finite extensions of ${\mathbb F}_q$. Since $\mathrm{Z}(L)$ and $D/\tilde P$ are cyclic, the claim follows from Proposition~\ref{wreathprod}. The case that $ L/\mathrm{Z}(L)$ is the projective special unitary group can be handled similarly.

Now consider the case that $L/\mathrm{Z}(L)$ is of type $B$, $C$ or $D$. Then $\tilde P$ is a defect group of $ \bL^F$. Let $1\ne z \in \mathrm{Z}(\tilde P)$. Since $p$ is odd, $ \mathrm{C}_{\bL} (z)$ is a Levi subgroup of $\bL$.
For any subset $A$ of $\bL$, denote by $\overline {A } $ the image of $A $ under the isogeny from $\bL $ onto $\bH$ and denote by $U$ the kernel of the isogeny. Since $U$ is a central $2$-subgroup of $\bL $, $\overline {\mathrm{C}_{\bL}( z)} = \mathrm{C}_{\bH}(\bar z) $. 

The group $ \mathrm{C}_{\bH}(\bar z) $ is a direct product 
$$ \mathrm{C}_{\bH}(\bar z) = \bH_0 \times \cdots \times \bH_r, $$ where $ \bH_0 $ is either the identity or a classical group and for $ i\geq 1 $, $ \bH_i $ is a direct product of general linear groups with $F$ transitively permuting the factors. This follows easily from the standard description of the root datum of $\bH$. So,
$$ \mathrm{C}_{\bH}(\bar z) ^F= \bH_0^F \times \cdots \times \bH_r^F, $$ where $\bH_i^F $ is a finite general linear or unitary group for $i\geq 1 $ and $\bH_0^F $ is a finite classical group (possibly the identity). 

Let $\bL_i $ be the inverse image in $\mathrm{C}_{\bL} (z) $ of $\bH_i $, $ 0\leq i \leq r $. Then $\bL_i $ is a normal $F$-stable subgroup of $ \mathrm{C}_{\bL}(z) $, $ \mathrm{C}_{\bL}(z) = \bL_0 \cdots \bL_r $ and 
$$[\bL_i, \bL_0\cdots \bL_{i-1}\bL_{i+1} \cdots \bL_r] \leq \bL_i \cap ( \bL_0\cdots \bL_{i-1} \bL_{i+1} \cdots \bL_r) = U. $$ 

We claim that $\overline { \bL_i ^F } $ is a normal subgroup of $\bH_i ^F$ of $2$-power index. Indeed, let $M$ be the inverse image in $ \bL_i $ of $\bH_i^F $. Then $M$ is $F$-stable since $U$ is $F$-stable. Further, $[M, F] \leq U $. Since $U$ is central in $ M$, the map $ M \to U $ defined by $ x \to x^{-1}F(x) $ is a group homomorphism. The kernel of this map is $\bL_i^F$ whence $\bL_i^F$ is a normal subgroup of $M$ and the index of $\bL_i^F$ in $M$ divides $|U|$. The claim follows since $U$ is a $2$-group. 

The claim implies that $ \bL_0^F \cdots \bL_r ^F$ is a normal subgroup of $2$-power index of
 $\mathrm{C}_{\bL}(z) ^F$. So, $\tilde P $ is a defect group of $ \bL_0^F \cdots \bL_r ^F$. The commutator relationship given above then implies that 
$\tilde P $ is a direct product $P_0 \cdots P_r $, where $P_i $ is a defect group of $ \bL_i^F $, $ 0\leq i \leq r $. By Proposition~\ref{indec}, $\tilde P= P_i $ for some $i$, $1 \leq i \leq r $. Since $z $ is central in $\mathrm{C}_{\bL} (z) $, $i \geq 1 $ and $\bH_i ^F $ is a general linear or unitary group with a central $p$-element. 
Let $ R= \tilde P \cap [\bL_i, \bL_i]^F$, a defect group of $[ \bL_i, \bL_i]^F$. By suitably replacing $\tilde P $ by an $ \bL_i ^F$-conjugate, we may assume that the relevant block of $[ \bL_i, \bL_i]^F $ is $\tilde P$-stable and hence that $\tilde P$ is a defect group of $ [ \bL_i, \bL_i]^F \tilde P $.

The isogeny $ \bL_i \to \bH_i $ restricts to an isogeny 
$[\bL_i, \bL_i] \to [\bH_i, \bH_i]$ with kernel $ U \cap [\bL_i, \bL_i ]$. However $[\bH_i, \bH_i]$ is a simply connected semisimple group, being the direct product of special linear groups. Thus, $ U \cap [\bL_i, \bL_i ] =1$ and the restriction of the isogeny to $[\bL_i, \bL_i ]$ is an abstract group isomorphism from $[\bL_i, \bL_i]$ to $[\bH_i, \bH_i]$ which commutes with $F$. Consequently, $[\bL_i, \bL_i] ^F \cong [\bH_i,\bH_i ]^F$. Also, $ U\cap [\bL_i, \bL_i ]\tilde P =1$ and 
the induced map $ [\bL_i, \bL_i]^F \tilde P \to \bH_i^F $ is injective. Thus $\overline {\tilde P} \cong \tilde P \cong P $ is a defect group of 
$ \overline{ [\bL_i, \bL_i]^F \tilde P} \cong [\bH_i, \bH_i]^F \overline {\tilde P} $. Since $\bH_i^F $ is a finite general linear or unitary group, the result now follows from \cite[Theorem~(3C)]{FS} and Proposition~\ref{wreathprod} in the same way as for the case that $L/\mathrm{Z}(L) $ is a projective special linear or unitary group. 
\end{proof}

%

\section{On $A_{p-1}$-components}

\begin{lem}\label{A4exp} Suppose that $p$ is an odd prime and let $G$ be a finite group isomorphic to one of the groups $\mathrm{SL}_p(q)$ or $\mathrm{SU}_p(q)$ 
for some prime power $q$ not divisible by $p$.
Let $U$ be a non-abelian $p$-subgroup of $G $. Then $U$ contains a normal abelian subgroup $U_0$ of index $p$ such that 
any element of $ U \setminus U_0 $ has order $p$.
If $ |U| \geq p^{p+1}$, then $ U_0$ contains an element of order $p^2 $. 
\end{lem}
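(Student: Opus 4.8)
The plan is to reduce everything to the explicit structure of a Sylow $p$-subgroup of $\mathrm{GL}_p(q)$ (resp. $\mathrm{GU}_p(q)$) and then argue with the wreath-product power formula together with a crude order count. First I would recall the classical description (Weir) of Sylow $p$-subgroups of $\mathrm{GL}_p(q)$ for $p\nmid q$. Writing $d=\operatorname{ord}_p(q)$, one has $d\mid p-1$, and the Sylow $p$-subgroup of $\mathrm{GL}_p(q)$ coincides with that of $\mathrm{GL}_{dm}(q)$, where $m=\lfloor p/d\rfloor$; it is isomorphic to $C_{p^e}\wr \operatorname{Syl}_p(\mathfrak{S}_m)$ with $e=v_p(q^d-1)$. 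If $d>1$ then $m=(p-1)/d<p$, so $\operatorname{Syl}_p(\mathfrak{S}_m)=1$ and the Sylow $p$-subgroup is abelian; then every $p$-subgroup of $\mathrm{SL}_p(q)$ is abelian and the lemma is vacuous. Hence I may assume $d=1$, i.e. $q\equiv 1\pmod p$, in which case the Sylow $p$-subgroup of $\mathrm{GL}_p(q)$ is $W=B\rtimes\langle\sigma\rangle$ with $B=\langle t_1\rangle\times\cdots\times\langle t_p\rangle\cong C_{p^e}^{\,p}$ the diagonal $p$-torus, $e=v_p(q-1)$, and $\sigma$ the permutation matrix of a $p$-cycle cyclically permuting the $t_i$. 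The unitary case is identical after replacing $q$ by $-q$: a nonabelian Sylow forces $q\equiv -1\pmod p$, the group $W$ is again $C_{p^e}\wr C_p$ with $e=v_p(q+1)$, and $\sigma$ may be taken to be a genuine permutation matrix in $\mathrm{GU}_p(q)$.

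Next I would cut down by the determinant. Since $p$ is odd, $\det(\sigma)$ is the sign of a $p$-cycle, hence $1$, while $\det(t_1^{a_1}\cdots t_p^{a_p})=\zeta^{a_1+\cdots+a_p}$ for a generator $\zeta$ of the cyclic group $C_{p^e}$. Thus, writing $s(b)=a_1+\cdots+a_p$, a Sylow $p$-subgroup of $G$ is $S=W\cap G=B_0\rtimes\langle\sigma\rangle$, where $B_0=\{\,b\in B:\ s(b)\equiv 0 \pmod{p^e}\,\}\cong C_{p^e}^{\,p-1}$ is abelian, normal in $S$ of index $p$, so $|S|=p^{e(p-1)+1}$. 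Conjugating, I may assume $U\le S$. As $U$ is nonabelian it is not contained in the abelian group $B_0$, so $U_0:=U\cap B_0$ is an abelian normal subgroup of $U$ of index exactly $p$; this is the required $U_0$.

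For the element orders, every $u\in U\setminus U_0$ has the form $b\sigma^j$ with $p\nmid j$ and $s(b)\equiv 0\pmod{p^e}$. Since $\gcd(j,p)=1$, the usual wreath-product computation gives $(b\sigma^j)^p=\prod_{k=0}^{p-1}{}^{\sigma^{jk}}b=(t_1\cdots t_p)^{s(b)}$, which is trivial because $p^e\mid s(b)$; hence $u$ has order $p$, proving the first assertion. Finally, for the order-$p^2$ statement I would argue by counting: the $p$-torsion of $B_0$ is $B_0[p]\cong C_p^{\,p-1}$, of order $p^{p-1}$. If $U_0$ had exponent at most $p$ then $U_0\le B_0[p]$ and $|U|=p\,|U_0|\le p^{p}$, contradicting $|U|\ge p^{p+1}$; so $U_0$ contains an element of order at least $p^2$, and a suitable power of it has order exactly $p^2$. (Note that this regime forces $e\ge 2$, consistent with $|S|=p^{e(p-1)+1}\ge p^{p+1}$.)

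The only real content is the first paragraph: pinning down $W\cong C_{p^e}\wr C_p$ and, in the unitary case, realizing the $p$-cycle by an honest permutation matrix so that $\det(\sigma)=1$; once $S=B_0\rtimes\langle\sigma\rangle$ is in hand, the remaining steps are the wreath $p$-th-power identity and an order count. The main obstacle is therefore bookkeeping in the two families uniformly, and checking that nonabelianness indeed forces $q\equiv\pm1\pmod p$ so that $W$ has the wreath form; everything after that is routine.
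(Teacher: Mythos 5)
Your argument is correct and essentially the same as the paper's: both exhibit a Sylow $p$-subgroup of $\mathrm{SL}_p(q)$ as (diagonal $p$-torus $\cap\,\mathrm{SL}_p$)$\rtimes\langle\sigma\rangle$ with the abelian normal part of rank $p-1$, set $U_0=U\cap S_0$, and bound $|U|\le p^p$ when $U_0$ is elementary abelian. The only cosmetic differences are that you treat $\mathrm{GU}_p(q)$ directly via permutation matrices where the paper embeds $\mathrm{SU}_p(q)$ into $\mathrm{SL}_p(q^2)$ by replacing $q$ with a power, and that you spell out the wreath-product $p$-th power identity that the paper leaves implicit.
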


\begin{proof} First, consider the case that $G$ is special linear or unitary. By replacing $q$ if necessary by some power we may assume that 
$ U\leq \mathrm{SL}_p(q) $ and $ p $ divides $q-1 $.
Let $ S_0$ be the Sylow $p$-subgroup of the group of diagonal matrices of $\mathrm{SL}_p (q)$ and let 
$\sigma $ be a non-diagonal, monomial matrix in $\mathrm{SL}_p(q)$ of order $p$. Then $ S:=\langle S_0, \sigma\rangle $ is a Sylow $p$-subgroup of $\mathrm{SL}_p(q)$, $S_0$ is normal in $S$, abelian, of index $p$ in $S$, rank $p-1$ and any element of $ S$ not in $S_0$ has order $p$. Let $ U_0 = U\cap S_0 $. Then $U_0 $ has index at most $p$ in $U$. On the other hand, since $U$ is non-abelian and $S_0 $ is abelian, $U $ is not contained in $U_0$. Thus $U_0$ has index $p$ in $U$, proving the first assertion. Now suppose that $U$ has exponent $p$. Then $ U_0 $ is elementary abelian. On the other hand, $ U_0\leq S_0 $ and the $p$-rank of $S_0 $ is $p-1$. Hence, $ |U | =p|U_0 | \leq p^p $.

\end{proof}

In the rest of this section, $p$ will denote a fixed prime and $\bG$ will denote a connected reductive group in characteristic $r \ne p $ with a Frobenius morphism $F$ with respect to some ${\mathbb F }_{r'} $ structure for some power $r'$ of $r$. In what follows, whenever we talk of a component of $\bG$ , we will mean a simple component of $[\bG, \bG]$.

We need a slight variation of the previous lemma.

\begin{lem}\label{A4indexp} Suppose that $p$ is odd. If $[\bG, \bG] = \SL_p$, then any $p$-subgroup of $ \bG^F $ has an abelian subgroup of index $p$. 
\end{lem}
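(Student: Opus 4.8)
The plan is to reduce to a Sylow $p$-subgroup and then control its structure by means of a maximal abelian normal subgroup together with the Weyl group $\mathfrak{S}_p$ of $\bG$.

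First I would reduce to Sylow subgroups. Suppose some $P \in \Syl_p(\bG^F)$ has an abelian subgroup $A$ with $|P : A| \le p$. Any $p$-subgroup $U$ of $\bG^F$ is contained in a $\bG^F$-conjugate of $P$, so after conjugating we may assume $U \le P$; then $U \cap A$ is abelian and $|U : U \cap A| = |UA : A| \le |P : A| \le p$. Hence it suffices to produce, in a fixed Sylow $p$-subgroup $P$ of $\bG^F$, an abelian subgroup of index at most $p$.

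Fix $P \in \Syl_p(\bG^F)$ and let $A$ be a maximal abelian normal subgroup of $P$. Since $P$ is a $p$-group, $A$ is self-centralising, i.e. $\mathrm{C}_P(A) = A$. As $p \ne r$, every element of $A$ is semisimple, so $A$ is a diagonalisable subgroup and lies in some maximal torus of $\bG$; consequently $\bM := \mathrm{C}_\bG(A)^\circ$ is a connected, $F$-stable reductive subgroup of maximal rank, whose root system $\Psi = \{\alpha : \alpha|_A = 1\}$ is a subsystem of the type $A_{p-1}$ root system of $\bG$. Because $A$ lies in a maximal torus contained in $\bM$ we have $A \le \bM^F$, while $\bM \le \mathrm{C}_\bG(A)$ gives $P \cap \bM^F \le \mathrm{C}_P(A) = A$; thus $P \cap \bM^F = A$. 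Moreover $P$ normalises $A$, hence normalises $\bM$, so $P \le \mathrm{N}_{\bG^F}(\bM) = \mathrm{N}_\bG(\bM)^F$ and
$$ P/A \;\cong\; P\bM^F/\bM^F \;\le\; \mathrm{N}_\bG(\bM)^F/\bM^F. $$
Since $\bM$ is connected, Lang's theorem yields $\mathrm{N}_\bG(\bM)^F/\bM^F \cong (\mathrm{N}_\bG(\bM)/\bM)^F$, where $X := \mathrm{N}_\bG(\bM)/\bM$ is the relative Weyl group of $\Psi$. Writing $\Psi \cong \prod_i A_{b_i - 1}$ with $\sum_i b_i = p$ (a partition of the $p$ coordinates into blocks), $X \cong \prod_s \mathfrak{S}_{c_s}$ permutes the blocks of equal size, where $c_s$ is the number of blocks of size $s$. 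As $\sum_s s\,c_s = p$, the $p$-part $|X|_p = p^{\sum_s v_p(c_s!)}$ is nontrivial only if $c_s \ge p$ for some $s$, which forces $s = 1$ and $c_1 = p$, i.e. $\Psi = \emptyset$ and $\bM$ is a maximal torus. Hence $|X|_p = p$ if $\bM$ is a maximal torus and $|X|_p = 1$ otherwise, and since $P/A$ is a $p$-group embedded in $X^F$,
$$ |P : A| \;\le\; |X^F|_p \;\le\; |X|_p \;\le\; p. $$
Therefore $A$ is an abelian subgroup of $P$ of index at most $p$, and by the first reduction the lemma follows.

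The step needing the most care is this uniform bound $|P : A| \le p$: it rests on identifying $\mathrm{N}_\bG(\bM)/\bM$ with the relative Weyl group $\prod_s \mathfrak{S}_{c_s}$ of a type-$A$ subsystem and on the elementary fact that $v_p(p!) = 1$ forces this group to have order prime to $p$ unless $\bM$ is a torus. Passing to the crude upper bound $|X^F|_p \le |X|_p$ is exactly what makes the argument insensitive to the isogeny type of $\bG$ and to whether $F$ is split or twisted, so that the unitary groups $\mathrm{SU}_p$ need no separate treatment; moreover the central torus $\mathrm{Z}(\bG)^\circ$ plays no role, as it contributes neither roots nor Weyl group elements. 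This is precisely the feature that separates the present statement from Lemma~\ref{A4exp} (the special case $\bG = \SL_p$), where the Sylow subgroup can instead be written down by hand.
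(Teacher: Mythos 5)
Your proof is correct, but it takes a genuinely different route from the paper's. The paper first observes that any subgroup of $\bG^F$ lies in $\mathrm{Z}^{\circ}(\bG)^{F^d}[\bG,\bG]^{F^d}$ for some $d$, writes a Sylow $p$-subgroup of that group as $R_1R_2$ with $R_1$ central, and then quotes Lemma~\ref{A4exp}, where the abelian subgroup of index $p$ in a Sylow $p$-subgroup of $\SL_p(q)$ or $\SU_p(q)$ is exhibited completely explicitly as the diagonal part $S_0$ of the monomial group. You instead work directly inside a Sylow $p$-subgroup $P$ of $\bG^F$: you take a maximal abelian normal subgroup $A$, identify $\mathrm{C}_{\bG}(A)^{\circ}$ as a Levi subgroup $\bM$ with $P\cap\bM^F=A$, and bound $P/A$ by the $p$-part of the relative Weyl group via Lang's theorem, using $v_p(p!)=1$. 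This bypasses Lemma~\ref{A4exp} entirely, avoids the passage to $F^d$, and makes the source of the bound transparent (it is $|\mathfrak{S}_p|_p$); it would generalize to any $\bG$ whose Weyl group has $p$-part at most $p$, provided abelian semisimple subgroups lie in maximal tori. Two small points deserve a word in your write-up: the claim that $A$ lies in a maximal torus is exactly where the hypothesis $[\bG,\bG]=\SL_p$ (simply connected, type $A$) enters --- justify it by simultaneous diagonalisation of the commuting semisimple $\SL_p$-parts of the elements of $A$, since the statement is false for general isogeny types (e.g.\ $\mathrm{PGL}_p$); and for the Weyl-group step it is cleaner to note that $\mathrm{N}_{\bG}(\bM)/\bM$ merely \emph{embeds} into $\mathrm{Stab}_W(\Psi)/W_{\Psi}\cong\prod_s\mathfrak{S}_{c_s}$, which is all the upper bound requires. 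The trade-off is that the paper's argument is more elementary given Lemma~\ref{A4exp} (which it needs anyway for Lemma~\ref{A2-punch}), while yours is more conceptual and self-contained.
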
 

\begin{proof} Since $ \bG = {\mathrm Z}^{\circ}(\bG) [\bG, \bG] $ any element and hence any subgroup of $\bG^F $ is contained in $\mathrm{Z}^{\circ} (\bG)^{F^d} [\bG, \bG]^{F^d} $ for some $ d\geq 1$. This can be seen as follows. Since $\bG = \mathrm{Z}^{\circ}(\bG)[\bG, \bG]$, any element $u$ of $\bG $ can be written in the form $u= xy$, where $x \in \mathrm{Z}^{\circ}(\bG)$ and $y \in [\bG, \bG]$. Let $\iota: \bG \to \GL_n$ be an embedding. Then for some power, say $F^t$ of $F$, some power, say $s$ of $r$, and for all $g\in \bG $, $ F^t \circ\iota (g) = F_s (\iota (g) )$ where $F_s $ is the standard Frobenius morphism of $\GL_n $ raising every matrix entry to the $s$-th power. The claim follows since for any $ h\in \GL_n $, $F_s^m (h)= h$ for some natural number $m$. Since any Sylow $p$-subgroup of $\mathrm{Z}^{\circ} (\bG)^{F^d} [\bG, \bG]^{F^d} $ is of the form $R_1 R_2 $, where $R_1 $ is a Sylow $p$-subgroup of $\mathrm{Z}^{\circ} (\bG)^{F^d} $ and $R_2 $ is a Sylow $p$-subgroup of $[\bG, \bG]^{F^d} $, the result follows from the previous Lemma and the fact that $R_1 $ is central in $R_1 R_2 $. 
\end{proof}

\begin{lem}\label{A2-punch} Suppose that $p$ is odd. Let $\bX=\SL_p$ be an $F$-stable component of $\bG $ such that $\bX^F$ has a central element of order $p$ and let $ \bY$ be the product of all other components of $\bG$ and $\mathrm{Z}^{\circ} (\bG )$. 
Let $ P$ be a $p$-subgroup of $\bG^F$ such that $ P \cap \bX^F$ is non-abelian of order at least $p^p$ and $P$ is not contained in $\bX^F \bY^F $. Then there exists an element of order $p^2 $ in $P$. Further, if $Z$ is a central subgroup of $\bG^F$ of order $p$ such that $ P/Z $ has exponent $p$, then $ Z \leq \bX^F$. 
\end{lem}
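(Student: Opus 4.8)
The plan is to reduce everything to a computation inside $\bX=\SL_p$, exploiting the fact that the required element of order $p^2$ cannot live in $\bX^F$ itself (the determinant-one condition kills $p$-th powers when $p\mid q\mp1$) but must be produced by the diagonal automorphism that $g$ supplies.

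First I would pin down the central product structure. Writing $\bG=\bX\bY$, the subgroup $\bX\cap\bY$ centralises $\bX$, hence lies in $\mathrm{Z}(\bX)=\mu_p$ (of order $p$, since $p$ is odd and the characteristic is $r\neq p$), so $\bX\cap\bY$ is either $1$ or $\mu_p$. If it were $1$ then $\bG=\bX\times\bY$, giving $\bG^F=\bX^F\times\bY^F=\bX^F\bY^F$ and $P\leq\bX^F\bY^F$, against the hypothesis. Thus $\bX\cap\bY=\mathrm{Z}(\bX)=\mu_p$; I denote this group $Z_0$. By the assumption that $\bX^F$ has a central element of order $p$, $Z_0=\mathrm{Z}(\bX^F)$ has order $p$ and is central in $\bG^F$.

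Next I would pass to the adjoint quotient. The isogeny $\rho\colon\bG\to\bX/\mathrm{Z}(\bX)\cong\mathrm{PGL}_p$ has kernel $\bY$, so by Lang--Steinberg $\rho(\bG^F)=\mathrm{PGL}_p^{\pm}(q)$ with kernel $\bY^F$, and $\rho^{-1}(\PSL_p^{\pm}(q))=\bX^F\bY^F$. Since $P\not\leq\bX^F\bY^F$, I get a $p$-element $g\in P$ with $\rho(g)\notin\PSL_p^{\pm}(q)$, i.e.\ $g$ induces a nontrivial diagonal automorphism of $\bX^F\cong\SL_p(q)$ (or $\mathrm{SU}_p(q)$). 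Applying Lemma~\ref{A4exp} to $V:=P\cap\bX^F$ yields an abelian $V_0\trianglelefteq V$ of index $p$ inside a maximal torus, with every element of $V\setminus V_0$ of order $p$; as $V$ is non-abelian, the Weyl-group images of these elements fill the cyclic $p$-group $\langle\bar\sigma\rangle$ generated by a $p$-cycle. After conjugating I may place $\langle\rho(V),\rho(g)\rangle$ in the Sylow subgroup $\bar D\rtimes\langle\bar\sigma\rangle$ of $\mathrm{PGL}_p^{\pm}(q)$, so that $\rho(g)$ has a well-defined Weyl part $\bar\sigma^{k}$.

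Now I would manufacture the order-$p^2$ element. Using that $V$ realises every Weyl part in $\langle\bar\sigma\rangle$, choose a monomial $w\in V$ with Weyl part $\bar\sigma^{-k}$, so that $u:=gw\in P$ has $\rho(u)$ a \emph{diagonal} element of $\mathrm{PGL}_p^{\pm}(q)$ lying outside $\PSL_p^{\pm}(q)$. If $\rho(u)$ has order $\geq p^2$ then so does $u$ and I am done; otherwise $\rho(u)$ has order $p$ and the $\SL_p$-component $x_u$ of $u$ (which is $F$-fixed modulo $Z_0$) satisfies $x_u^{\,p}=\xi\cdot\mathrm{id}$ for some $\xi\in Z_0$. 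The decisive fact is $\xi\neq1$: for a diagonal element of $\mathrm{PGL}_p^{\pm}(q)$ outside $\PSL_p^{\pm}(q)$, every $\GL_p(q)$-lift has determinant that is not a $p$-th power, and the identity $(\mathrm{diag}\cdot(\text{$p$-cycle}))^{p}=\det\cdot\mathrm{id}$ then forces the scalar $p$-th power of the determinant-one lift to be a nontrivial element of $Z_0$. Since $w\in\bX$ commutes with the $\bY$-component $y_0$ of $g$, I have $u=x_u y_0$ and $g=x_0 y_0$ with the \emph{same} $y_0$, so the $\bY$-parts cancel in $\kappa:=u^{p}(g^{p})^{-1}=x_u^{\,p}x_0^{-p}\in Z_0$, and the computation above yields $\kappa\neq1$. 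Hence $u^{p}\neq g^{p}$, so at least one of $u^{p},g^{p}$ is nontrivial, giving an element of order $p^2$ in $P$. For the second assertion, if $Z\trianglelefteq\bG^F$ is central of order $p$ with $P/Z$ of exponent $p$, then $u^{p},g^{p}\in Z$, whence $\kappa=u^{p}(g^{p})^{-1}\in Z$; but $\kappa\neq1$ lies in $Z_0=\mathrm{Z}(\bX^F)\leq\bX^F$, so $Z\cap\bX^F\neq1$ and therefore $Z=\langle\kappa\rangle=Z_0\leq\bX^F$.

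The main obstacle is the boxed step of the third paragraph, namely proving $\kappa\neq1$ — equivalently, genuinely exhibiting order $p^2$. The delicate part is that one cannot find this element inside $\bX^F$ (when $p\,\|\,q\mp1$ the Sylow $p$-subgroup of $\SL_p(q)$ has exponent $p$), so the argument must convert $g$'s diagonal automorphism into a \emph{pure} diagonal element of $\mathrm{PGL}_p^{\pm}(q)$ by the Weyl-cancelling choice of $w$, track the Lang-twisted $F$-structure of the $\SL_p$-lift $x_u$, and verify — via the $p$-th-power comparison that eliminates the $\bY$-component $y_0$ — that the resulting central scalar really is nontrivial.
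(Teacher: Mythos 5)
Your route is genuinely different from the paper's: where the paper pulls $P$ back to $\bX\times\bY$, projects to $\bX$ to obtain a subgroup $P_1\supsetneq P\cap\bX^F$ of order at least $p^{p+1}$, and then extracts an element of order $p^2$ from the rank bound in Lemma~\ref{A4exp}, you extract the obstruction from the determinant: the $\bX$-component $x_u$ of a Weyl-corrected element $u\in P\setminus\bX^F\bY^F$ lies over a diagonal element of $\mathrm{PGL}_p^{\pm}(q)$ outside $\PSL_p^{\pm}(q)$, and then $x_u^p=\xi\cdot\mathrm{id}$ with $1\ne\xi\in\mu_p$. That computation is correct (though your justification via the $p$-cycle identity is misplaced there, since $x_u$ is diagonal; the right argument is that $\xi=1$ would put all eigenvalues of $x_u$ in $\mu_p$ and force $x_u$ to be $F$-fixed, i.e.\ $\rho(u)\in\PSL_p^{\pm}(q)$), and it is an attractive alternative because it does not even use the hypothesis $|P\cap\bX^F|\ge p^p$.

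The gap is exactly at the step you flag. You compare $u=gw$ with $g$ and assert $\kappa=u^p(g^p)^{-1}=x_u^p\,x_0^{-p}\ne1$ ``by the computation above'', but that computation only controls $x_u^p$; you never control $x_0^p$. If $\rho(g)$ has nontrivial Weyl part then $x_0$ is monomial of $p$-cycle type with determinant $1$, so $x_0^p=1$ and $\kappa=\xi\ne1$ as you want. But if $\rho(g)$ is already diagonal (Weyl part $k=0$, which the hypotheses allow), then your own determinant argument gives $x_0^p=\xi_0\cdot\mathrm{id}$ with $\xi_0\ne1$, your $w$ lies in the diagonal part $V_0$, and $\kappa=w^p$. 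Concretely, for $P$ a Sylow $p$-subgroup of $\GL_p(q)$ with $p\,\|\,q-1$ and $g=\mathrm{diag}(\zeta,1,\dots,1)$, $\zeta\in\mu_p\setminus\{1\}$, one has $g^p=u^p=1$ and $\kappa=1$, so the argument terminates without producing the order-$p^2$ element (which nevertheless exists, e.g.\ $g\sigma$). The repair is to compare $u$ not with $g$ but with $u\tau$ for a non-diagonal $\tau\in P\cap\bX^F$: then $x_u\tau$ is a determinant-one $p$-cycle monomial, so $(u\tau)^p=y_0^p$ while $u^p=\xi y_0^p$, whence $u^p\bigl((u\tau)^p\bigr)^{-1}=\xi\ne1$; this is precisely the paper's cancellation trick, applied there to an order-$p^2$ element of $P_1$ in place of your $x_u$. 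With that single change both assertions of the lemma follow as you describe.
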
 

\begin{proof} Let $ \tilde P $ be the inverse image of $ P$ under the surjective group homomorphism $\bX \times \bY \to \bG $ induced by multiplication. 
The kernel of the multiplication map is isomorphic to $\bX \cap \bY = \mathrm{Z}(\bX) \cap \mathrm{Z}(\bY) $. Since $\bX $ is a simple group of type $A_{p-1}$, the kernel of the multiplication map is a group of order $p$ and in particular, $ \tilde P $ is a finite $p$-group. Let $P_1 \leq \bX $ be the image of $\tilde P$ under the projection of $\bX \times \bY \to \bX $. Clearly $P_1 $ contains $P \cap \bX^F $. We claim that $P\cap \bX^F$ is proper in $P_1$. Indeed, otherwise $ \tilde P \leq (P\cap \bX^F ) \times \bY $, whence $P \leq (P \cap \bX^F ) \bY $. This implies that $ P \leq (P \cap \bX^F ) (P\cap \bY^F ) \leq P \cap \bX^F \bY^F$, a contradiction. Since $ P\cap \bX^F$ is assumed to have order at least $p^p$, the claim implies that $|P_1| \geq p^{p+1} $. 

Now $P_1 $ is a finite subgroup of $\bX$, thus of some finite special linear (or unitary) group. Hence, by Lemma ~\ref{A4exp}, there exists an element $ x \in P_1 $ of order $p^2 $. Let 
$ y\in \bY $ be such that $w= xy \in P $. Since $ P\cap \bX^F$ is non-abelian again by Lemma~\ref{A4exp}, there exists $\sigma \in P \cap \bX^F $ such that $ x\sigma $ has order $p$. Then $w $ and $ w \sigma \in P $, $ w^p= x^py^p $ and $ (w\sigma )^p = y^p$. Then either $ w^p\neq 1 $ or $ (w\sigma )^p \ne 1 $, proving the first part of the result. Suppose that $ P/Z$ has exponent $p$. Then, $w^p, (w\sigma)^p$ are in $Z$. Hence $ x^p \in Z $. Since $ 1\ne x^p $ and $ Z $ has order $p$ the second assertion follows.
\end{proof} 

\begin{lem} \label{split-stab} Let ${\mathcal X}$ be an $F$-stable subset of components of $\bG$. Let $\bX$ be the product of all elements of ${\mathcal X} $ and let $ \bY $ be the product of ${\mathrm Z}^{\circ} (\bG)$ and all the components of $[\bG, \bG] $ not in $ {\mathcal X} $.
\begin{enumerate}
\item Let $ P $ be a defect group of a block $b$ of $ \bG^F$. Then $ P \cap \bX^F \bY^F $ is a defect group of a block of $\bX^F \bY^F $ covered by $b$ and is of the form $P_1 P_2$, where $P_1$ is a defect group of a block of $\bX^F $ covered by $b$ and $P_2$ is a defect group of a block of $\bY^F$ covered by $b$.
If $\mathrm {Z} (\bX)^F \cap \mathrm{Z} (\bY)^F $ has $p'$-order, then $ P= P _1 P_2 $ and the product is direct.
\item Let $c$ be a $p$-block of $ \bX^F\bY^F $. Then the index of the stabiliser of $c$ in $\bG^F$ is prime to $p$. 
Suppose further that $\mathrm{Z}(\bX)^F \cap \mathrm{Z}(\bY)^F $ is a $p$-group. Then $c $ is $\bG^F$-stable, $c$ is covered by a unique block of $\bG^F$ and if $P$ is a defect group of the block of $\bG^F$ covering $c$, then $P \cap \bX^F \bY^F $ is a defect group of $c$ and $ P/(P \cap \bX^F \bY^F ) \cong \bG^F/\bX^F\bY^F$. 
\end{enumerate}
\end{lem}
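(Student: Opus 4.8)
The plan is to exploit the central product structure $\bG = \bX\bY$ together with the standard Clifford theory of blocks. First I would record the group-theoretic skeleton. Since $\bX$ and $\bY$ are products of normal subgroups of $\bG$ (components of $[\bG,\bG]$ together with $\mathrm{Z}^{\circ}(\bG)$), both are $F$-stable normal subgroups with $\bG = \bX\bY$ and $\bX \cap \bY = \mathrm{Z}(\bX)\cap\mathrm{Z}(\bY) =: C$, a finite central group. Taking fixed points, $N := \bX^F\bY^F$ is normal in $\bG^F$ with $\bX^F \cap \bY^F = C^F$ central in $N$; thus $N$ is the central product of $\bX^F$ and $\bY^F$ over $C^F$. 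Applying the Lang--Steinberg theorem to the connected group $\bX \times \bY$ and the multiplication isogeny onto $\bG$ (whose kernel is central and isomorphic to $C$) gives $\bG^F/N \cong H^1(F,C)$, an abelian group of order $|C^F| = |\mathrm{Z}(\bX)^F \cap \mathrm{Z}(\bY)^F|$. This identifies $\bG^F/\bX^F\bY^F$ and shows that its order is the quantity controlling both parts of the statement.

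For part (1), let $b$ be a block of $\bG^F$ with defect group $P$. By the Clifford theory of blocks (e.g.\ \cite{NaTs}), $b$ covers a block $b'$ of $N$ which may be chosen so that $P \cap N$ is a defect group of $b'$. To factor $P \cap N$ I would pass to the direct product $H := \bX^F \times \bY^F$, which maps onto $N$ with central kernel isomorphic to $C^F$. The block $b'$ is dominated by a block $\beta_X \otimes \beta_Y$ of $H$, whose defect group is $Q_X \times Q_Y$; since domination by a central subgroup carries defect groups to their images, $P \cap N$ is, up to $N$-conjugacy, the image $P_1P_2$, where $P_1 := Q_X$ and $P_2 := Q_Y$ are defect groups of the blocks $\beta_X$ of $\bX^F$ and $\beta_Y$ of $\bY^F$ covered by $b$. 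Finally, if $C^F$ has $p'$-order then $|\bG^F : N|$ is prime to $p$, so the $p$-group $P$ lies in $N$ and $P = P \cap N = P_1P_2$; moreover $P_1 \cap P_2 \le C^F$ is both a $p$-group and of $p'$-order, hence trivial, and the product is direct.

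For the first assertion of part (2) I would analyse the $\bG^F$-action on blocks of $N$. As $\bG^F/N$ is abelian, it suffices to show that its Sylow $p$-subgroup fixes $c$, and every $p$-element of $\bG^F/N$ lifts to a $p$-element $g \in \bG^F$. Writing $g = xy$ with $x \in \bX$ and $y \in \bY$, the identity $F(g) = g$ forces $x^{-1}F(x) \in C$, so conjugation by $g$ preserves $\bX^F$ and acts there as the inner-diagonal automorphism $\bar x \in \bX_{\mathrm{ad}}^F$; since $g$ is a $p$-element, $\bar x$ is a $p$-element, and likewise for $\bY^F$. The key input is then that a $p$-power-order inner-diagonal automorphism of $\bX^F$ (resp.\ $\bY^F$) stabilises every $p$-block. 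This follows from the Jordan decomposition of blocks: such an automorphism translates the labelling semisimple $p'$-element $s$ of the dual group by a central $p$-element, which leaves the $p'$-part unchanged and hence fixes each block $\mathcal{E}_p(\bX^F, s)$. Granting this, $g$ fixes $\beta_X$ and $\beta_Y$, hence fixes the unique block $c$ of $N$ dominated by $\beta_X \otimes \beta_Y$; therefore $|\bG^F : \Stab_{\bG^F}(c)|$ is prime to $p$.

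The remaining claims are then formal. If moreover $C^F$ is a $p$-group, the orbit of $c$ has size dividing the $p$-power $|\bG^F : N|$ yet prime to $p$, so the orbit is trivial and $c$ is $\bG^F$-stable. Since $\bG^F/N$ is a $p$-group and $c$ is stable, there is a unique block $b$ of $\bG^F$ covering $c$; with $P$ a defect group of $b$, the argument of part (1) shows that $P \cap N$ is a defect group of $c$, and the standard fact that over a $p$-group quotient the defect group of the covering block surjects onto the quotient yields $P/(P \cap N) \cong \bG^F/N$. The main obstacle is precisely the reductive-group input above, namely controlling the action of $p$-element inner-diagonal automorphisms on the blocks of $\bX^F$ and $\bY^F$: had the action been by arbitrary automorphisms of $p$-power order the $p'$-index statement would genuinely fail (for example torus conjugation already permutes blocks of a unipotent group), so it is this geometric feature of conjugation coming from the ambient algebraic groups $\bX$ and $\bY$ that must be used.
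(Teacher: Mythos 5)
Your proposal is correct and follows essentially the same route as the paper: the central-product structure of $\bX^F\bY^F$ and standard Clifford theory for (i), and for (ii) a regular embedding $\bX \leq \tilde\bX$ to replace the $p$-element $x$ by a $p$-element of $\tilde\bX^F$, together with the fact that $p$-elements of $\tilde\bX^F$ stabilise the blocks of $\bX^F$. The one loose point is your justification of that last fact: conjugation by $\tilde\bX^F$ does not ``translate the labelling semisimple element $s$'' (it preserves each Lusztig series); the mechanism the paper actually uses, via Bonnaf\'e's Corollaire~11.13, is that the $\tilde\bX^F$-orbit of a character $\chi$ of $\bX^F$ lying in a series $\mathcal{E}(\bX^F,s)$ with $s$ a $p'$-element has, by Clifford theory, length controlled by the stabiliser of a character $\tilde\chi$ of $\tilde\bX^F$ above $\chi$ under tensoring by $\mathrm{Irr}(\tilde\bX^F/\bX^F)$ --- it is this tensoring action that translates $s$ by central elements of the dual group, and since $s$ has $p'$-order the orbit length is prime to $p$, which gives the stabiliser statement you need.
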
 

\begin{proof} The first statement of (i) follows from the theory of covering blocks as $ \bX^F \bY^F $ is a normal subgroup of $\bG^F$, $ \bX^F $ and $ \bY^F $ centralise each other and $ \bX^F \cap \bY^F = \mathrm {Z} (\bX)^F \cap \mathrm{Z} (\bY)^F \subseteq {\mathrm Z} (\bG )^F $ is central in $\bX^F \bY^F$. The second assertion of (i) follows from the first assertion, the fact that $|\bG^F| = |\bX^F| |\bY^F|$ and $ \bX^F \cap \bY^F = \mathrm {Z} (\bX)^F \cap \mathrm{Z} (\bY)^F $.

We now prove (ii). Let $u \in \bG^F$ be a $p$-element. Then $u=xy $, with $x \in \bX $ and $ y\in \bY$ such that $x^{-1}F(x) =yF(y^{-1})$ is an element of $\mathrm{Z}(\bX) \cap \mathrm{Z}(\bY)$. We may assume without loss of generality that $x$ and $y$ are $p$-elements. The block $c$ of $\bX^F \bY^F$ is a product $c_1c_2 $ of blocks $c_1$ of $\bX^F$ and $c_2$ of $\bY^F$. Thus, it suffices to prove that $\,^x c_1 =c_1 $ and $\,^yc_2 =c_2 $.

Now consider a regular embedding $ \bX \leq \tilde \bX $, where $\tilde \bX$ is a connected reductive group with connected centre containing $\bX $ as a closed subgroup, such that $[\tilde \bX, \tilde \bX ] =[\bX, \bX]$ and such that $F$ extends to a Frobenius morphism of $ \tilde \bX$. Since $ x^{-1}F(x) \in \mathrm{Z}(\bX) \leq \mathrm{Z}^{\circ} (\tilde \bX)$, $ x= x_1z $ for some $ x_1 \in \tilde \bX^F$, and $z \in \mathrm{Z}^{\circ} (\tilde \bX)$. We may assume also that $x_1 $ is a $p$-element. Then $\,^xc_1= \,^{x_1}c_1 $. 
On the other hand, $c_1 $ contains an ordinary irreducible character $\chi$ in a Lusztig series corresponding to a semisimple element of order prime to $p$ in the dual group of $\bX $, hence the index in $\tilde \bX ^F$ of the stabiliser in $\tilde \bX^F$ of $\chi$ has order prime to $p$ (see for instance \cite[Corollaire~11.13] {Bon}). 
This proves the first assertion. If $\mathrm{Z}(\bX)^F \cap \mathrm{Z}(\bY)^F $ is a $p$-group, then $|\bG^F/\bX^F \bY^F| = |\mathrm {Z} (\bX)^F \cap \mathrm{Z} (\bY)^F |$ is a power of $p$. By the first assertion, $c$ is $\bG^F$-stable and by standard block theory, there is a unique block of $\bG^F$ covering $c$. The second assertion of (ii) now follows from (i).
\end{proof}

\begin{lem} \label{A2-abelpunch}
Suppose that $p$ is odd. Let $\bX $ be an $F$-stable component of $\bG $ of type $A_{p-1} $ and let $ \bY$ be the product of all other components of $\bG$ and $\mathrm{Z}^{\circ} (\bG )$. Suppose that $\mathrm{Z}(\bX)^F \cap \mathrm{Z}( \bY ) ^F \ne 1 $ and that $ P$ is a defect group of $\bG^F$ such that $ P \cap \bX^F$ is abelian. Then there exists an $F$-stable torus $\bT $ of $\bX $ such that $P$ is a defect group of $ (\bY \bT)^F $. 
\end{lem}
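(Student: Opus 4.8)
The plan is to replace the $A_{p-1}$-component $\bX$ by a maximal torus $\bT$ that accommodates the abelian group $P\cap\bX^F$, and thereby realise $P$ as a defect group of the smaller group $(\bY\bT)^F$. First I would record the basic reductions. Since $\bX$ has type $A_{p-1}$ with $p$ prime, it is either simply connected ($\mathrm{Z}(\bX)\cong C_p$) or adjoint ($\mathrm{Z}(\bX)=1$); the latter is excluded by the hypothesis $\mathrm{Z}(\bX)^F\cap\mathrm{Z}(\bY)^F\ne 1$. As $\mathrm{Z}(\bX)$ has order $p$, that hypothesis forces $K:=\mathrm{Z}(\bX)=\mathrm{Z}(\bX)^F\subseteq\mathrm{Z}(\bY)$, so $K=\mathrm{Z}(\bX)\cap\mathrm{Z}(\bY)=\bX\cap\bY$ is a central subgroup of $\bG^F$ of order $p$ and $\bG=\bX\bY$. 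Because $K$ is a $p$-group, Lemma~\ref{split-stab}(ii) applies to the commuting factors $\bX,\bY$: the block $b$ with defect group $P$ covers a block $c=c_1c_2$ of $\bX^F\bY^F$, one has $P\cap\bX^F\bY^F=P_1P_2$ with $P_1:=P\cap\bX^F$ a defect group of $c_1$ and $P_2:=P\cap\bY^F$ a defect group of $c_2$, and $P/(P_1P_2)\cong\bG^F/\bX^F\bY^F\cong C_p$; here $P_1\cap P_2=K$.

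Next I would produce the torus. By hypothesis $P_1$ is abelian, and it is a defect group of the block $c_1$ of the special linear or unitary group $\bX^F=\SL_p(\epsilon q)$. By the theorem of Fong and Srinivasan \cite[Theorem~(3C)]{FS}, exactly as in Proposition~\ref{classical}, $P_1$ is the $p$-part of the Sylow $p$-subgroup of a Levi subgroup of $\GL_p(\epsilon q)$; since this Sylow $p$-subgroup is abelian, it equals the $p$-part of one of the maximally split tori of that Levi. Intersecting with $\bX=\SL_p$ produces an $F$-stable maximal torus $\bT$ of $\bX$ with $(\bT^F)_p=P_1$, and in particular $K\subseteq\bT$. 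This $\bT$ is the torus sought in the statement.

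Finally I would pass to $\bG':=\bY\bT$. As $\bX$ and $\bY$ commute, $\bT$ is a central torus of $\bG'$; moreover $\bT\cap\bY=K$ is a $p$-group, and a direct computation gives $(\bY\bT)^F\cap\bX^F\bY^F=\bT^F\bY^F$ with $|(\bY\bT)^F:\bT^F\bY^F|=p$. The argument of Lemma~\ref{split-stab}(ii) applies with $\bT$ in the role of $\bX$ (a torus is its own regular embedding and all its irreducible characters are linear, lying in singleton Lusztig series), so the block $b':=b_Tc_2$ of $\bT^F\bY^F$, where $b_T$ is any block of the abelian group $\bT^F$ with defect group $(\bT^F)_p=P_1$, is covered by a unique block $\hat b$ of $(\bY\bT)^F$ whose defect group $\hat P$ satisfies $\hat P\cap\bT^F\bY^F=P_1P_2$ and $\hat P/(P_1P_2)\cong C_p$. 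In particular $|\hat P|=|P|$, and $\hat P$ is an extension of $P_1P_2$ by $C_p$ realised, just as $P$ is, through the single central subgroup $K\subseteq\bT\subseteq\bX$.

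The main obstacle is to glue these two index-$p$ central extensions, that is, to show the defect group $\hat P$ built inside $(\bY\bT)^F$ is also a defect group of $b$ rather than merely a group of the same order. I would argue that $\hat P\subseteq(\bY\bT)^F\subseteq\bG^F$ meets $\bX^F\bY^F$ in $\hat P\cap\bT^F\bY^F=P_1P_2$, a defect group of the covered block $c$, and surjects onto $\bG^F/\bX^F\bY^F$ because $(\bY\bT)^F$ already does; the characterisation of defect groups of covering blocks in Lemma~\ref{split-stab}(ii) for the pair $\bX,\bY$ then identifies $\hat P$ as a defect group of $b$. Since all defect groups of $b$ are $\bG^F$-conjugate, we may take $P=\hat P$, and this $P$ is at the same time a defect group of $\hat b$, a block of $(\bY\bT)^F$, as required. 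The essential point making this work is that both extensions are governed by the same central subgroup $K$ and the same Frobenius action, so the two defect groups agree up to conjugacy and not just in order.
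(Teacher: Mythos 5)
Your overall strategy --- use Fong--Srinivasan to replace the $A_{p-1}$-component by an $F$-stable maximal torus whose $F$-fixed $p$-part is $P_1:=P\cap\bX^F$, and then realise $P$ inside $(\bY\bT)^F$ --- is the same as the paper's, and your preliminary reductions (the index computations, $K=\bX\cap\bY$ of order $p$, the splitting $P\cap\bX^F\bY^F=P_1P_2$) are fine. The gap is in how you choose $\bT$ and, consequently, in the final ``gluing''. You select $\bT$ using only the abelian group $P_1$ and the block $c_1$, so nothing forces the $\bX$-part $x$ of a generator $u=xy$ of $P$ over $P_1P_2$ to lie in $\bT$; hence there is no reason that $P$ itself is contained in $(\bY\bT)^F$, and you are reduced to manufacturing a substitute $\hat P$. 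The paper avoids this by building the torus \emph{around} $x$: it lifts $x$ to a $p$-element $g$ of a regular embedding $\tilde\bX^F\cong\GL_p(\epsilon q)$, uses $\Br_P(b)=\Br_P(b_1)\Br_P(b_2)\ne 0$ to find an element in the support of $b_1$ centralising $S=\langle P_1,g\rangle$, so that $\Br_S(b_1)\ne0$ and $S$ sits inside a defect group $D$ of a block of $\tilde\bX^F$ covering $b_1$; only then does Fong--Srinivasan (plus the abelianness of $P_1$, which forces $D$ abelian because the degree is the prime $p$) identify $D$ with the Sylow $p$-subgroup of $\tilde\bT^F$, giving $x\in\bT:=\bX\cap\tilde\bT$ and hence $u\in(\bT\bY)^F$ and $P\le(\bT\bY)^F$. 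After that, $\Br_P(eb_2)\ne0$ and order considerations finish the proof.

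Your attempted repair does not close this gap. Lemma~\ref{split-stab}(ii) states \emph{necessary} conditions satisfied by a defect group of the covering block; it is not a characterisation. A $p$-subgroup $Q\le\bG^F$ with $Q\cap\bX^F\bY^F=P_1P_2$ a defect group of $c$ and $[Q:P_1P_2]=p$ need not satisfy $\Br_Q(b)\ne0$, and two such index-$p$ extensions of $P_1P_2$ inside $\bG^F$ need not be $\bG^F$-conjugate (the elements of $\bG^F\setminus\bX^F\bY^F$ act on $\bX^F$ as diagonal automorphisms, and different cosets of $P_1P_2$ can generate nonconjugate, even nonisomorphic, extensions). Moreover your $\hat P$ satisfies $\Br_{\hat P}(b_Tc_2)\ne0$ for a block $b_T$ of $\bT^F$, which does not transfer to $\Br_{\hat P}(c_1c_2)\ne0$, since $b_T$ and $c_1$ are idempotents of different group algebras. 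So the sentence ``we may take $P=\hat P$'' is exactly the point that needs proof and is not supplied. (A secondary, repairable imprecision: Fong--Srinivasan describes defect groups of blocks of $\GL_p(\epsilon q)$, so you must first pass to a covering block of the regular embedding and then argue that its defect group $D$ is abelian because $D\cap\SL_p(\epsilon q)=P_1$ is and the degree is $p$.)
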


\begin{proof} In the proof, we will identify blocks with the corresponding central primitive idempotents. Let $b$ be a block of $\bG^F$ with $P$ as defect group and let $P_0: =P \cap \bX^F\bY^F $. The hypothesis implies that $|\mathrm{Z}(\bX)^F \cap \mathrm{Z}(\bY)^F|=p $. So, by Lemma~\ref{split-stab}, $b$ is a block of $\bX^F \bY^F $, $P_0$ is a defect group of $b$ as block of $\bX^F \bY^F$ and $ P/P_0 $ is isomorphic to $ \bG^F/\bX^F \bY^F $. Let $ b= b_1 b_2 $, where $ b_1 $ is the block of $\bX^F$ covered by $b$ and $b_2 $ is the block of $\bY^F$ covered by $b$. 

Let $ u\in P$ generate $P$ modulo $P_0$ and write $u =xy $, $x\in \bX $, $y\in \bY $. Since $u$ is a $p$-element, we may assume that both $x$ and $y$ are $p$-elements. 

Now consider an $F$-compatible regular embedding of $\bX$ in $\tilde \bX$ such that $ \tilde \bX ^F$ is a finite general linear (or unitary) group. Since $\mathrm{Z}(\tilde \bX)$ is connected, there exists $z \in \mathrm{Z}^{\circ}(\tilde \bX)$ such that $g:=xz^{-1} \in \tilde \bX ^F$. Further, we may choose $z$ such that $g$ is a $p$-element. Since $u=xy$ normalises $P_1$, $x$ normalises $P_1 $ and therefore $g$ normalises $P_1 $. Therefore 
$S= \langle P_1, g \rangle \leq \tilde X^F $ is a $p$-group. Since $u$ normalises $b_1$ it also follows that $ b_1 $ is $S$-stable.

We claim that there exists a block of $\tilde \bX^F $ covering $b_1 $ with a defect group $D$ containing $S$. Indeed, in order to prove the claim, it suffices to prove that $\Br_S (b_1) \ne 0 $. Since $b_1 $ and $b_2 $ are both $\bG^F$-stable,
$$ 0 \ne \Br_P (b) = \Br_P(b_1) \Br_P(b_2) $$ and consequently $\Br_P(b_1) \ne 0 \ne \Br_P(b_2) $. Hence writing $b_1 =\sum_{v \in \bX^F}\alpha_v v$ as an element of the modular group algebra of $\bX^F$ there exists $v \in \bX^F$ with $\alpha_v $ non-zero such that $v$ centralises $P$ and in particular $v$ centralises $P_1 $ and $u$. Since $z$ is central, and $ y $ centralises $\bX$, we have that $v$ also commutes with $g$. Hence $v$ centralises $S$ and it follows that $\Br_S (b_1) \ne 0 $, proving the claim.

By the block theory of finite general linear (or unitary) groups (see \cite{FS}; noting that $p$ divides $q-1 $ in the linear case and that $p$ divides $q+1 $ in the unitary case) $D$ is a Sylow $p$-subgroup of the centraliser of some semisimple element of $\tilde \bX^F$. Since by hypothesis $P_1 =D\cap \bX^F$ is abelian, we have that $D$ is abelian, hence $D$ is the Sylow $p$-subgroup of $ \tilde \bT^F$ for some $F$-stable maximal torus $\tilde \bT $ of $\tilde \bX $. Set $\bT = \bX \cap \tilde \bT $, an $F$-stable maximal torus of $\bX$. Then $ P_1 = D \cap \bX^F $ is a Sylow $p$-subgroup of $\bT^F $. Now $ g =xz \in S \leq D \leq \tilde \bT $, and $ z \in \tilde \bT $ (as $z $ is central), hence $ x=gz^{-1}\in \tilde \bT \cap \bX = \bT$. 

Set 
$\bG_0=\bT\bY $. We have $u=xy \in \bG_0^F $. Since $ \bX \cap \bY \leq \mathrm{Z}(\bX)\leq \bT $, we have that $ \bG_0^F \cap \bX^F\bY^F =\bT^F\bY^F$ and $\bG_0^F/\bT^F \bY^F $ is isomorphic to a subgroup of $\bG^F /\bX^F\bY^F $ and in particular has order $p$. 
Hence $ \bG_0 ^F = \langle \bT^F\bY^F, u \rangle $. Let $ e $ be a block of $\bT^F$ such that $ e b_2 \ne 0 $. Since $ \bT^F$ and $ \bY^F$ commute, $eb_2 $ is a block of $\bT^F \bY^F$. Since $\bT $ is central in $\bG_0 $, $ e$ is $\bG_0^F$-stable. Further, $b_2$ is $P$-stable hence $ b_2 $ is $\bG_0^F$-stable. So $eb_2$ is a $\bG_0^F$-stable block of $\bT^F \bY^F$ and therefore a block of $\bG_0^F$. 
Since $P_1 $ is the Sylow $p$-subgroup of $\bT^F$ and $\bT^F$ is abelian, $P_1 $ is the defect group of $e$ and $P_2 $ is a defect group of $ b_2 $. Thus, $P_1P_2 $ is a defect group of $ eb_2 $ as block of $\bT^F \bY^F$. Since $ \Br_P (eb_2) =\Br_P(e) \Br_P(b_2) $ is non-zero, it follows by order considerations that $P$ is a defect group of $ eb_2$.
\end{proof}

\section{The case $p=3, 5$}\label{p5} 
In this section we handle the remaining exceptional groups of Lie type for $p\le 5$.
\begin{lem} \label{gen-morita-defect} Let $G$, $H$ be finite groups, $B$ a $p$-block of $G$ and $C$ a $p$-block of $H$ such that $B$ and $C$ are Morita equivalent. Let $P$ be a defect group of $ B$, and $Q$ a defect group of $C$. Suppose that $P$ has exponent $p$. Then $P$ is abelian if and only if $Q$ is abelian. Further, $P$ has an abelian subgroup of index $p$ if and only if $Q$ has an abelian subgroup of index $p$. 
\end{lem}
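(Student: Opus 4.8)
The plan is to transfer each of the two structural properties between $Q$ and $P$ through numerical invariants that are preserved by Morita equivalence, using the hypothesis $\exp(P)=p$ only at the end to convert those invariants into statements about the defect group. So the argument splits into an ``invariance'' half (what a Morita equivalence preserves) and a ``structure'' half (how exponent $p$ lets one read off abelianness from the preserved data).

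First I would record the relevant invariants. A Morita equivalence between $B$ and $C$ identifies their module categories, hence identifies their Cartan matrices up to a simultaneous permutation of the simple modules; in particular $B$ and $C$ have the same Cartan invariants and the same elementary divisors. Since the largest elementary divisor of the Cartan matrix of a $p$-block is the order of its defect group, this already yields $|P|=|Q|$, and it trivially yields $l(B)=l(C)$. Next I would use that a Morita equivalence over a complete discrete valuation ring $\mathcal{O}$ induces a perfect isometry between $B$ and $C$; since perfect isometries preserve the heights of irreducible characters, the numbers $k_i(B)=|\{\chi\in\mathrm{Irr}(B):\chi\text{ has height }i\}|$ satisfy $k_i(B)=k_i(C)$ for all $i\ge 0$. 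Thus $B$ and $C$ share the entire tuple $(|P|,\,l,\,k_0,\,k_1,\dots)$.

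The second step is the bridge from these invariants to the group structure, and this is where $\exp(P)=p$ is used. For the first assertion I would invoke the Brauer Height Zero property: a defect group is abelian exactly when every irreducible character of the block has height zero, i.e.\ when $k(B)=k_0(B)$. As $k(B)=k_0(B)$ holds iff $k(C)=k_0(C)$, abelianness transfers, giving $P$ abelian $\iff$ $Q$ abelian. For the second assertion I would rely on the classification (in the spirit of the fourth author's computations) of the invariants $k_0,k_1,l$ of blocks whose defect group has an abelian subgroup of index $p$ and exponent $p$: this property is again detected by the common data $(|P|,l,k_0,k_1)$ and so likewise transfers. In both cases $\exp(P)=p$ is what pins the numerics to the structure, since among $p$-groups of a fixed order and exponent $p$ the trichotomy ``abelian / abelian-of-index-$p$ / neither'' is reflected faithfully in these block invariants.

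The main obstacle is exactly this last translation: making the characterizations \emph{biconditional} rather than one-directional. The easy implications (abelian defect forces only height-zero characters, by Kessar--Malle) hold unconditionally, but the converses --- all-height-zero forces abelian defect, and the analogue for an abelian subgroup of index $p$ --- are the delicate point, and it is here that one must genuinely exploit $\exp(P)=p$ together with $|P|=|Q|$ instead of appealing to the height-zero conjecture in full strength. A secondary technical point to state explicitly is that the Morita equivalence really does preserve heights, i.e.\ induces a perfect isometry; this is automatic for the splendid/basic equivalences arising in the applications but should be recorded.
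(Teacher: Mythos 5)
Your proposal has a genuine gap: the ``bridge'' from the Morita-invariant numerical data back to the structure of the defect group is exactly the part you do not supply, and the tools you point to cannot supply it. For the first assertion you would need \emph{both} directions of Brauer's Height Zero Conjecture (from $P$ abelian you get $k(B)=k_0(B)$ by Kessar--Malle, hence $k(C)=k_0(C)$, but to conclude that $Q$ is abelian you need the converse implication ``all characters of height zero $\Rightarrow$ abelian defect group,'' which was not available at the time of this paper and is in any case a vastly heavier input than the lemma warrants). For the second assertion, there is no classification of blocks whose defect groups have an abelian subgroup of index $p$ in terms of $(|P|,l,k_0,k_1)$; asserting that this property ``is again detected by the common data'' is not an argument. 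You correctly flag these converses as ``the delicate point,'' but flagging a gap is not closing it. There is also the secondary issue you mention yourself: a Morita equivalence of blocks must be lifted to $\mathcal{O}$ before one can extract a perfect isometry and hence the height distribution, and the lemma as stated does not assume this.

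The paper's proof avoids character theory entirely and is much shorter: by a result of K\"ulshammer the \emph{exponent} of the defect group is a Morita invariant, so $Q$ also has exponent $p$; by Bessenrodt--Willems (via the complexity of modules) the \emph{rank} of the defect group is a Morita invariant; and the order is determined by the largest elementary divisor of the Cartan matrix, so $|P|=|Q|$. Once both groups are known to have exponent $p$, any abelian subgroup of either is elementary abelian, so ``abelian'' is equivalent to ``rank equals $\log_p$ of the order'' and ``has an abelian subgroup of index $p$'' is equivalent to ``rank is at least $\log_p$ of the order minus one''; both conditions are expressed purely in the shared invariants. If you want to salvage your approach, you should replace the character-theoretic invariants $k_i$ by these module-theoretic ones.
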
 

\begin{proof} By \cite[Satz~J]{Ku81}, the exponent of defect groups is an invariant of Morita equivalence, hence $Q$ has exponent $p$. In particular any abelian subgroup of $P$ or of $Q$ is elementary abelian. The remaining statements follow by the fact that Morita equivalence preserves the rank of the corresponding defect groups (see \cite[Theorem~2.6]{BW}).
\end{proof}

\begin{lem}\label{general-bonrou} Let $\bL $ be connected reductive, with Frobenius morphism $F$, and let $Z$ be a central $p$-subgroup of $ \bL ^F$. Let $b$ be a block of $\bL^F $ and $P$ a defect group of $b$. Suppose that $ P/Z $ is non-abelian, supports a transitive fusion system and $|P/Z | \geq p^4 $. Let $\bH $ be an $F$-stable Levi subgroup of $\bL$, let $c$ be a Bonnaf\'e-Rouquier correspondent of $b$ in $\bH$ and let $Q$ be a defect group of $c$. Then $Q/Z$ has exponent $p$ and $ Q/Z$ does not have an abelian subgroup of index $p$. In particular, a Sylow $p$-subgroup of $\bH^F $ does not have an abelian subgroup of index $p$.
\end{lem}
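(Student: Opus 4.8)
The plan is to transport the relevant $p$-group invariants from the defect group $P$ of $b$ to the defect group $Q$ of $c$ through the Morita equivalence underlying the Bonnaf\'e--Rouquier correspondence, after first reducing everything modulo the central $p$-subgroup $Z$. Since a central $p$-subgroup lies in every defect group, $Z\leq P$ and $Z\leq Q$, and $Z$ is central in $\bH^F$ as well, so the quotients $P/Z$ and $Q/Z$ are defined. First I would record the properties of $P/Z$. As $P/Z$ supports a transitive fusion system, it has exponent $p$ (as noted in the introduction for transitive fusion systems). Because $P/Z$ is moreover non-abelian of order $|P/Z|\geq p^4$, Proposition~\ref{indexp} shows that $P/Z$ cannot contain an abelian subgroup of index $p$. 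These two facts --- exponent $p$ and no abelian subgroup of index $p$ --- are exactly what I want to push across to $Q/Z$.

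Next I would descend the Bonnaf\'e--Rouquier equivalence modulo $Z$. By definition of the correspondence, $b$ and $c$ are Morita equivalent via a bimodule $M$ coming from Lusztig induction. As $Z$ is a $p$-group, the central character of any block on $Z$ is trivial, so $Z$ acts trivially on $M$ from both sides; hence $M$ descends to an $(\mathcal{O}[\bL^F/Z],\mathcal{O}[\bH^F/Z])$-bimodule inducing a Morita equivalence between the blocks $\bar b$ and $\bar c$ of $\bL^F/Z$ and $\bH^F/Z$ dominated by $b$ and $c$. Since $Z\leq P$ and $Z\leq Q$, the dominated blocks $\bar b$ and $\bar c$ have defect groups $P/Z$ and $Q/Z$. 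This descent is the main obstacle: one must verify that the equivalence is genuinely compatible with reduction by the central subgroup, and this is precisely where the hypothesis that $Z$ is a $p$-group (so that the relevant central characters are trivial on $Z$) is used.

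With the Morita equivalence $\bar b\sim\bar c$ in hand, Lemma~\ref{gen-morita-defect} applies with first defect group $P/Z$ of exponent $p$: it yields that $Q/Z$ again has exponent $p$, that $Q/Z$ is non-abelian (since $P/Z$ is), and that $Q/Z$ has no abelian subgroup of index $p$ (since $P/Z$ has none). This establishes the first two assertions of the lemma.

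For the final statement, let $S\in\Syl_p(\bH^F)$ contain $Q$ and suppose, for a contradiction, that $S$ had an abelian subgroup $A$ of index $p$. Then $A$ is normal in $S$, so $Q\cap A$ is abelian of index at most $p$ in $Q$, and its image $(Q\cap A)Z/Z$ is an abelian subgroup of index at most $p$ in $Q/Z$. But $Q/Z$ is non-abelian with no abelian subgroup of index $p$, so no such subgroup can exist. Hence a Sylow $p$-subgroup of $\bH^F$ has no abelian subgroup of index $p$.
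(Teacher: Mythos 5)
Your proposal follows essentially the same route as the paper: pass to the blocks $\bar b$ and $\bar c$ of $\bL^F/Z$ and $\bH^F/Z$ dominated by $b$ and $c$, observe that these have defect groups $P/Z$ and $Q/Z$ and are Morita equivalent, and then combine Proposition~\ref{indexp} (to see that $P/Z$, being non-abelian of order at least $p^4$ with a transitive fusion system, has exponent $p$ and no abelian subgroup of index $p$) with Lemma~\ref{gen-morita-defect} to transport these properties to $Q/Z$; the Sylow statement then follows by intersecting as you do. The one step where your justification is off is the descent of the Bonnaf\'e--Rouquier equivalence modulo $Z$: the paper simply cites \cite[Prop.~4.1]{EKKS} for this, and your heuristic (``the central character is trivial on $Z$, so $Z$ acts trivially on the bimodule'') is not correct as stated over $\mathcal{O}$ --- a central $p$-element need not act trivially on a block lattice (consider $\mathcal{O}C_p$ as a module over itself); what one actually uses is that the left and right actions of $Z$ on the Bonnaf\'e--Rouquier bimodule coincide, so that one may pass to $Z$-coinvariants and obtain a bimodule inducing a Morita equivalence between the dominated blocks. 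You correctly isolated this descent as the essential point, so the overall structure of your argument matches the paper's; only that one justification should be replaced by the citation or by the correct argument.
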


\begin{proof} Let $\bar b$ be the block of $\bL^F/Z$ dominated by $b$ and let $\bar c $ be the block of $\bH^F/Z $ dominated by $c$. By 
\cite[Prop.~4.1] {EKKS}, $\bar b $ and $\bar c $ are Morita equivalent. Further, $P/Z$ is a defect group of $\bar b $ and $ Q/Z$ is a defect group of $\bar c $. The result now follows from Lemma~\ref{indexp} and Lemma~\ref{gen-morita-defect}. 
\end{proof}

\begin{pro} \label{E6-redux} Let $\bL $ be connected reductive, in characteristic $r \ne p=3$ with Frobenius morphism $F$, and suppose that 
$ [\bL,\bL] $ is simply connected of type $E_6 $ in characteristic $r \ne 3 $. Let $Z$ be a cyclic subgroup of $ \mathrm{Z}(\bL^F) $ of order $1$ or $3$ and let $P$ be a defect group of $ \bL^F$. Suppose that $P/Z$ supports a transitive fusion system and $ |P/ Z | \geq 3^7$. Suppose further that either $Z=1 $ or that $\bL $ is simple. Then $P/Z$ is abelian.
\end{pro}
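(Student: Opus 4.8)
The plan is to argue by contradiction, so suppose $P/Z$ is nonabelian. Since $P/Z$ supports a transitive fusion system and $|P/Z|\ge 3^7\ge 3^4$, Proposition~\ref{indexp} shows that $P/Z$ has no abelian subgroup of index $3$; in particular $\exp(P/Z)=3$. It therefore suffices to exhibit an abelian subgroup of index at most $3$ in $P/Z$, as Proposition~\ref{indexp} would then force $P/Z$ abelian, the desired contradiction.

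Let $s$ be a semisimple label of the block $b$. I would first reduce to quasi-isolated labels. If $s$ is not quasi-isolated in the dual group, then $b$ is a Bonnaf\'e--Rouquier correspondent of a block $c$ of a proper $F$-stable Levi subgroup $\bH$ of $\bL$; as the correspondence is a splendid Morita equivalence, a defect group $Q$ of $c$ satisfies $Q\cong P$, and Lemma~\ref{general-bonrou} guarantees that $Q/Z$ is again nonabelian of exponent $3$ with no abelian subgroup of index $3$. Thus it is enough to treat defect groups of blocks of $\bH^F$, where $\bH$ runs through the proper $F$-stable Levi subgroups of $\bL$, together with $\bL$ itself when $s$ is quasi-isolated. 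For the quasi-isolated labels (including $s=1$) I would read off the defect groups from the description of the quasi-isolated $3$-blocks of type $E_6$ in~\cite{KesMa} and from Enguehard's list of unipotent blocks; the principal and unipotent blocks are excluded at once, because a Sylow $3$-subgroup of $E_6(q)$ has exponent $9$ and this persists modulo the order-$3$ subgroup $Z$.

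Given such a group I would decompose its defect group along the $F$-orbits of the components using Lemma~\ref{split-stab}. A classical orbit of type $B,C,D$ contributes an abelian defect group (Proposition~\ref{classical}), and since the centres of classical components are $2$-groups it never produces a $3$-gluing. A single type-$A$ orbit contributes a defect group of a general linear or unitary group, which by~\cite{FS} is a product of iterated wreath products; whenever such an orbit accounts, up to an abelian direct factor, for all of $P/Z$ --- which by Proposition~\ref{indec} happens exactly when the central intersections have order prime to $3$ --- Proposition~\ref{wreathprod} forces $P/Z$ to be elementary abelian, a contradiction (this already absorbs an orbit of type $A_5$, which can only co-occur with $A_1$). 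Hence $3$ must divide one of the central intersections, and this can only come from at least two orbits of type $A_{p-1}=A_2$ whose fixed points carry a central element of order $3$. For these I would apply the punching lemmas: Lemma~\ref{A2-abelpunch} to replace by an $F$-stable maximal torus every such component $\bX$ with $P\cap\bX^F$ abelian, and Lemma~\ref{A2-punch}, together with $\exp(P/Z)=3$, to place $Z$ inside --- and to identify as an exponent-$3$ extraspecial group $3^{1+2}$ --- the contribution of any $\bX$ with $P\cap\bX^F$ nonabelian.

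After this peeling, a single surviving nonabelian $A_2$-component makes $P$ a central product of an abelian group with one copy of $3^{1+2}$, so $P/Z$ has an abelian subgroup of index at most $3$ and Proposition~\ref{indexp} finishes. Two surviving $A_2$-components give, once Proposition~\ref{indec} has removed any nontrivial abelian direct factor, a central product $3^{1+4}$ of order $3^5<3^7$, contradicting the order hypothesis; as a proper $F$-stable Levi of $E_6$ contains at most two orthogonal $A_2$-components, this settles the non-quasi-isolated case. The genuinely delicate part is the quasi-isolated analysis in $\bL$ of type $E_6$ itself, where the centraliser of $s$ may be of type $A_2\times A_2\times A_2$. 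Here the relevant defect group turns out to be the extraspecial group $3^{1+6}$ of exponent $3$ and order $3^7$, whose centre is exactly $\mathrm{Z}(\bL^F)$: when $\bL$ is simple and $Z$ has order $3$ one has $Z=\mathrm{Z}(\bL^F)$, so $P/Z$ is elementary abelian, which is precisely the assertion; and when $Z=1$ the element $\mathrm{Z}(\bL^F)=\mathrm{Z}(P)$ is central in $\bL^F$, so its centraliser differs from that of a noncentral element of $P$ and the fusion cannot be transitive, making the hypothesis vacuous. I expect the main effort, and the main obstacle, to lie precisely in carrying out this reading of the Kessar--Malle tables uniformly across all quasi-isolated labels and both (untwisted and twisted) forms, and in verifying the extraspecial structure and the exact location of the centre in the $A_2^3$ case.
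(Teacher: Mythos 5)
Your overall route is the paper's: argue by contradiction, use Proposition~\ref{indexp} (via Lemma~\ref{general-bonrou}) to get that $P/Z$ has exponent $3$ and no abelian subgroup of index $3$, reduce by Bonnaf\'e--Rouquier to a quasi-isolated block of an $F$-stable Levi subgroup $\bH$, and then analyse the components of $\bH$ with Lemma~\ref{split-stab} and the punching lemmas. However, there are genuine gaps. First, you dispose of Levi subgroups with a classical component ($D_4$, $D_5$) by citing Proposition~\ref{classical}, but that proposition assumes the block in question is fusion-transitive; fusion-transitivity is a property of the original block $B$ and is not inherited by the Bonnaf\'e--Rouquier correspondent nor by the covered blocks of individual components, so the citation does not apply. (What does work, and what the paper uses, is the purely group-theoretic fact that Sylow $3$-subgroups of $D_4(q)$, $D_5(q)$, ${}^2D_4(q)$, ${}^2D_5(q)$ and ${}^3D_4(q)$ have an abelian subgroup of index $3$, contradicting the conclusion of Lemma~\ref{general-bonrou}.) Similarly, your claim that each nonabelian $A_2$-contribution is exactly extraspecial of order $3^3$ (so that two of them give $|P/Z|\le 3^5$) is unjustified when $9\mid q\mp 1$; the paper's contradiction in the two-$A_2$ case comes instead from the cyclicity of $\mathrm{Z}(\bH)/\mathrm{Z}^{\circ}(\bH)$ together with Lemma~\ref{A2-punch}, which forces $Z=\mathrm{Z}(\bX)=\mathrm{Z}(\bY)$ against $\bX\cap\bY=1$.

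Second, the part you identify as the crux --- a quasi-isolated label with centraliser of type $A_2^3$ and an extraspecial defect group of order $3^7$ --- does not occur: for $p=3$ the semisimple label of a $3$-block is a $3'$-element, so the order-$3$ quasi-isolated element with centraliser $A_2\times A_2\times A_2$ is not an admissible label. The actual non-unipotent quasi-isolated cases are those of lines 13--15 of Table~4 in \cite{KesMa}; the paper eliminates them by showing that the defect group either is abelian, is too small, has an element of order $9$ modulo $Z$, or has noncyclic $\mho^{1}(P)$ while $\mho^{1}(P)\le Z$ with $Z$ cyclic. Moreover, your fallback for $Z=1$ --- that $\mathrm{Z}(P)$ is central in $\bL^F$, so ``the fusion cannot be transitive'' --- is not available: the hypothesis is only that $P/Z$ supports \emph{some} transitive fusion system as an abstract $p$-group (in the application it arises from a block of a strictly larger group having $\bL^F$ modulo a central subgroup as a component), not that the fusion system of the block of $\bL^F$ itself is transitive, so centrality in $\bL^F$ yields no obstruction.
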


\begin{proof} Suppose that $P/Z$ is non-abelian.
Let $\bH $ be an $F$-stable Levi subgroup of $\bL $ and $c$ a block of $\bH^F$ such that $c$ is quasi-isolated and $b$ and $c$ are Bonnaf\'e-Rouquier correspondents. Let $s\in \bH^*$ be a semisimple label of $c $ (and $b $). Since $b$ and $c$ are Bonnaf\'e-Rouquier correspondents, $\operatorname{C}_{\bL^*}(s)= \operatorname{C}_{\bH^*}(s)$. Let $Q$ be a defect group of $c$. By Lemma~\ref{general-bonrou}, we may assume that $ Q/Z$ has exponent $3$ and does not have 
an abelian subgroup of index $3$. Note that all components of $\bL $ and hence of $\bH$ are simply connected.

If $\bH^F$ has a component of type $D_4 $ or $D_5 $, then 
the only other possible components are of type $A_1 $. We get a contradiction by Lemma~\ref{split-stab}(i), Lemma~\ref{general-bonrou} and the fact that finite groups of type $D_4(q) $, $D_5(q) $, $\,^2D_4(q) $, $\, ^2D_5(q) $ and $ \,^3D_4(q) $ have a Sylow $3$-subgroup with an abelian subgroup of index $3$. 

Thus, either all components of $\bH $ are of type $A$ or $\bH$ has a component of type $E_6 $. Let us first consider the case that 
all components of $\bH$ are of type $A$. 
In particular, $\mathrm{C}_{\bH^*}^{\circ} (s) $ is a Levi subgroup of $\bH ^*$ and since $s$ has order prime to $3$, $\mathrm{C}_{\bL^*} (s) = \mathrm{C}_{\bH^*} (s) $ is connected. It follows that $ s$ is central in $ \bH^*$, hence that $Q$ is a defect group of a unipotent block of $\bH^F$.

Suppose that $\bH$ has a component ${\bX} $ of type $A_5 $. Then $\bX $ is $F$-stable and is the only component of $\bH$. If $\bX^F$ does not contain a central element of order $3$, then by Lemma \ref{split-stab}(i), a Sylow $3$-subgroup of $ \bH^F$ is a direct product of a Sylow $3$-subgroup of $\bX^F $ with the Sylow $3$-subgroup of $\mathrm{Z}^{\circ} (\bH)^F $. Furthermore in this case a Sylow $3$-subgroup of ${\bX}^F$ has an abelian subgroup of index $3$. If $\bX^F$ contains a central element of order $3$, then by 
\cite[Prop.~3.3 and Theorem]{CE94}, the principal block is the only unipotent block of $\bX^F$, and it follows that $Q/Z$ has an element of order $9$ since $\mathrm{PSL}_6(q)$ (respectively $\mathrm{PSU}_6(q)$) has elements of order $9$ if $ 3\mid q-1 $ (respectively $ 3 \mid q+1$).

Suppose that $\bH$ has a component of type $A_4 $. Then the only other possible component is of type $A_1 $ and it follows from Lemma~\ref{split-stab}(i) that a Sylow $3$-subgroup of $\bH^F$ has an abelian subgroup of index $3$.

Suppose that $\bH $ has a component $\bX $ of type $A_3 $. If all other components are of type $A_1 $, then the above argument applies. If $\bH$ has a component of type $A_2 $, say $\bY $, then this is the only other component of $ \bH $. If the Sylow $3$-subgroups of $\bX^F$ are abelian, then Lemma~\ref{split-stab}(i) and Lemma~\ref{A4indexp} give the result. Thus, we may assume that the Sylow $3$-subgroups of $\bX^F$ are non-abelian. Thus, $\bX^F$ is isomorphic to $ \mathrm{SL}_4 (q) $ (respectively $\mathrm{SU}_4(q) $) with $ 3 \mid q-1 $ (respectively $3 \mid q+1$). Consequently, the principal block is the unique 
unipotent block of $\bX^F$. In particular, $Q$ contains a Sylow $3$-subgroup of $\bX^F$ and $Q/Z$ has an element of order $ 9$.

Thus, we may assume that all components of $\bH$ are of type $A_2 $ or $A_1 $. 
By rank considerations, there can be at most two components of type $A_2 $. By Lemma~\ref{split-stab} (i) and Lemma~\ref{A4indexp} we may assume that there are two $F$-stable components $\bX $ and $\bY $ of type $A_2 $ such that both $\bX^F $ and $\bY^F$ have central elements of order $3$.
Consequently, the principal block of $\bX^F$ is the only unipotent block of $\bX^F$ and similarly for $\bY^F$. The only other component of $\bH$, if it exists is of type $A_1 $, which also has a unique unipotent block. Hence $ Q $ is a Sylow $3$-subgroup of $ \bH^F$. 

Since $\bH $ is a Levi subgroup of $\bL $, there is surjective group homomorphism from $\mathrm{Z}(\bG)/\mathrm{Z}^{\circ} (\bG) $ to $\mathrm{Z}(\bH)/\mathrm{Z}^{\circ} (\bH)$ (see \cite[Prop.~4.1]{Bon}) and by hypothesis, $[\bL, \bL]$ is simple of type $E_6 $. Hence $\mathrm{Z}(\bH)/ \mathrm{Z}^{\circ} (\bH)$ is cyclic of order $1$ or $3$. Since $\bX$ and $\bY$ are the only components of $\bH$ with central elements of order $3$, it follows that either $\mathrm{Z}(\bX)$ or $\mathrm{Z}(\bY )$ covers 
$\mathrm{Z}(\bH)/ \mathrm{Z}^{\circ} (\bH) $. Thus, either $\mathrm{Z}(\bX) \leq \mathrm{Z}(\bY) \mathrm{Z}^{\circ} (\bH) $ or 
$\mathrm{Z}(\bY) \leq \mathrm{Z}(\bX) \mathrm{Z}^{\circ} (\bH) $. 

Assume that $\mathrm{Z}(\bX) \leq \mathrm{Z}(\bY) \mathrm{Z}^{\circ} (\bH) $. Let $\bU $ be the product of all components of $\bH$ other than $\bX $ and $\mathrm{Z}^{\circ} (\bH)$. Then, $\mathrm{Z}(\bX)^F \leq ( \mathrm{Z}(\bY) \mathrm{Z}^{\circ} (\bH) )^F \leq \bU^F$ and hence $ 3 \mid |\bX^F \cap \bU^F|$. Since $Q$ is a Sylow $3$-subgroup of $\bH^F$ and $|\bH^F|=|\bX^F| |\bU^F|$, $Q$ is not contained in $\bX^F \bU^F$. Further, $Q \cap \bX^F$ is a Sylow $3$-subgroup of $\bX^F$ and in particular is non-abelian of order at least $3^3$. By Lemma~\ref{general-bonrou}, $Q/Z$ has exponent $3$. So, by Lemma~\ref{A2-punch}, 
$1\ne Z \leq \mathrm{Z}(\bX) $ whence $Z=Z(\bX) $. Since $Z\ne 1 $, $ \bL $ is simple by hypothesis. In particular, $Z =\mathrm{Z}(\bX) $ covers $\mathrm{Z}(\bG)/\mathrm{Z}^{\circ}(\bG)$. It follows that $ \mathrm{Z}(\bY ) \leq \mathrm{Z}(\bX) \mathrm{Z}^{\circ} (\bH) $. By the same argument as above with $\bY $ replacing $\bX$, we get that $ Z =\mathrm{Z}(\bY )$. In particular $ \mathrm{Z}(\bX) =\mathrm{Z}(\bY)$, a contradiction since $\bX \cap \bY = 1$.

Finally, consider the case that $ \bH $ has a component of type $E_6 $. Then $\bH= \bL$ and $b=c$. Let $b_0$ be a block of $[\bL, \bL]^F$ covered by $b$ and let $P_0= P \cap [\bL, \bL]^F $ be a defect group of $b_0 $. Let $ R$ be the Sylow $3$-subgroup of $\mathrm{Z}^{\circ} (\bL )^F$. By Lemma~\ref{split-stab}(i) applied with $\bX =[\bL, \bL] $ and $ \bY =\mathrm{Z}^{\circ} (\bL ) $, 
$ P \cap [\bL, \bL]^F \mathrm{Z}^{\circ} (\bL )^F = P_0 R $. So, $P/P_0R $ is a subgroup of $ \bL^F/ ([\bL, \bL]^F \mathrm{Z}^{\circ} (\bL )^F ) $. Since
 $ \bL^F/ ([\bL, \bL]^F \mathrm{Z}^{\circ} (\bL )^F) $ is either trivial or has order $3$, we have that $ P_0R $ has index at most $3$ in $P$. If $P_0$ is abelian, then $P$ and hence $P/Z$ has an abelian subgroup of index $3$. Thus, $P_0 $ is non-abelian. We claim that $R \leq P_0$. Indeed, by hypothesis,
either $Z=1 $ or $[\bL, \bL]=\bL $. If $\bL=[\bL, \bL]$, then $R=1 $ and the claim holds trivially. If $Z=1 $, then $P$ supports a transitive fusion system.
Hence $ R \leq \mathrm{Z}(P) \leq [P, P] \leq [\bL, \bL]^F$ and the claim is proved. Thus, $P_0=PR $ has index at most $3$ in $P$.

Assume first that $b_0$ is unipotent. The unipotent $3$-blocks of exceptional groups have been described in \cite{En00}. 
If $b_0$ is the principal block, then $P/Z $ has exponent greater than $3$. So, $b_0$ is non-principal and $P_0$ is non-abelian. By \cite{En00} (last part of the proofs for Tableau I), $P_0$ is the extension of a homocyclic group, say $T$, of rank $2$ by a group of order $3$. If $T$ is not elementary abelian, then $ TZ/Z $ has exponent at least $9$ and hence so does $ P/Z$. Thus, we may assume that $T$ is elementary abelian. So, $ |P_0| =3^3 $ and $|P| \leq 3^4$, a contradiction.

So, we may assume that $b_0$ is quasi-isolated but not unipotent. Here the blocks are described in \cite[Section 4.3]{KesMa}. In particular, $b_0$ corresponds to one of lines 13, 14, or 15 of Table~4 of \cite{KesMa} (and the corresponding Ennola duals; see the last remark of Section~4 of \cite{KesMa}). If $b_0$ corresponds to line~15, then $P_0$ is abelian. If $b_0$ corresponds to line 14, then $P_0 $ is the extension of a homocyclic group, say $T$, of rank $4$ by a group of order $3$. If $T$ is not elementary abelian, then $TZ/Z $ has exponent at least $9$ and if $T$ is elementary abelian, then $|P_0| \leq 3^5 $, whence $|P| \leq 3^6$, a contradiction. If $b_0$ corresponds to line 13, then $P_0$ contains a subgroup isomorphic to a Sylow $3$-subgroup of $\mathrm{SL}_6(q)$ with $3\mid q-1$. 
In particular, $ \mho^{1}(P) $ is not cyclic. On the other hand, since $P/Z$ has exponent $3$, $ \mho^{1}(P) \leq Z $. This is a contradiction as $Z$ is cyclic.
\end{proof} 

\begin{pro} \label{E7E8} Suppose that either $p=3 $ and $ \bL $ is simple and simply connected of type $E_7 $ or $E_8 $ in characteristic $r \ne 3 $ or that $p=5 $ and $\bL$ is simple of type $E_8$ in characteristic $r \ne 5 $. Let $ F $ be a Frobenius morphism on $\bL $ and let $P$ be a defect group of a $p$-block of $ \bL^F$. Suppose that $P$ supports a transitive fusion system and $ |P | \geq 3^7$ if $p=3 $. Then $P$ is abelian. 
\end{pro}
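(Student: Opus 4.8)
Throughout I argue by contradiction, so I assume that $P$ is nonabelian. Since the fusion system is transitive and $|P|\geq p^4$, the group $P$ then has exponent $p$, and by Proposition~\ref{indexp} it has \emph{no} abelian subgroup of index $p$; the whole point is to contradict this last property. Let $s\in\bL^{*F}$ be a semisimple label of the block $b$ having $P$ as a defect group. The argument is driven by Bonnaf\'e--Rouquier theory together with Lemma~\ref{general-bonrou}, and I split according to whether $s$ is quasi-isolated in $\bL^*$.

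First I would treat the quasi-isolated case, in which $\bH=\bL$ and $b$ is itself quasi-isolated. Here the defect groups are described explicitly: the unipotent blocks of $E_7$ and $E_8$ are listed in \cite{En00}, and the remaining quasi-isolated blocks in \cite{KesMa}. Running through these descriptions, the principal block is excluded because its defect group (a full Sylow $p$-subgroup) has exponent greater than $p$, contradicting $\exp(P)=p$, while every other block in the tables has either abelian defect groups or defect groups with an abelian subgroup of index $p$, each of which contradicts the property established in the first paragraph. The hypothesis $|P|\geq 3^7$ (for $p=3$) is precisely what eliminates the small-defect lines of these tables.

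Next I would treat the non-quasi-isolated case. Then there is a proper $F$-stable Levi subgroup $\bH$ with $\mathrm{C}_{\bL^*}(s)\leq\bH^*$ and a Bonnaf\'e--Rouquier correspondent $c$ of $b$ in $\bH^F$; let $Q$ be a defect group of $c$. Applying Lemma~\ref{general-bonrou} with $Z=1$ shows that $Q$ has exponent $p$ and no abelian subgroup of index $p$. I now decompose $Q$ using Lemma~\ref{split-stab}. Because $\bL$ is of type $E_7$ or $E_8$, its fundamental group ($C_2$ or trivial) has order prime to $p$, so $\mathrm{Z}(\bH)/\mathrm{Z}^{\circ}(\bH)$ is a $p'$-group; consequently the centre of any component of $[\bH,\bH]$, being a $p$-group, lies inside $\mathrm{Z}^{\circ}(\bH)$. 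For a factor $\bX$ of type $A_{p-1}$ with $\mathrm{Z}(\bX)^F\neq 1$ this forces $\mathrm{Z}(\bX)^F\cap\mathrm{Z}(\bY)^F$ to be a nontrivial $p$-group (with $\bY$ collecting the remaining factors together with $\mathrm{Z}^{\circ}(\bH)$), so that $Q\not\subseteq\bX^F\bY^F$ by Lemma~\ref{split-stab}; and there $Q\cap\bX^F$ is either abelian, giving an abelian contribution, or a full nonabelian Sylow $p$-subgroup of order $p^p$ (the unipotent/principal case on $\bX^F$), in which case Lemma~\ref{A2-punch} produces an element of order $p^2$ in $Q$, contradicting $\exp(Q)=p$. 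Every other factor contributes an abelian exponent-$p$ defect group: type-$A$ factors of rank $\neq p-1$ and factors of type $B$, $C$, $D$ give, via the Fong--Srinivasan description \cite{FS} of their blocks (and the wreath-product structure underlying Proposition~\ref{wreathprod}), defect groups whose only nonabelian possibilities involve a wreath product $C_p\wr C_p$ of exponent $p^2$ and are therefore abelian once the exponent is $p$; the sole surviving nonabelian exponent-$p$ contributions come from exceptional factors of type $E_6$ (inside $E_7$ or $E_8$) or $E_7$ (inside $E_8$), whose quasi-isolated blocks are again controlled by \cite{En00} and \cite{KesMa} exactly as in the quasi-isolated case. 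In each configuration $Q$ is seen to possess an abelian subgroup of index $p$, the desired contradiction.

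The main obstacle is the case-by-case bookkeeping over the proper Levi subgroups of $E_7$ and $E_8$, and in particular the exceptional $E_6$-factor, whose quasi-isolated blocks must be analysed just as in the $E_6$-component case of Proposition~\ref{E6-redux} (lines~13--15 of Table~4 of \cite{KesMa}, excluding the principal block and the noncyclic-$\mho^1$ line). The feature that makes $E_7$ and $E_8$ tractable --- and that distinguishes them sharply from $E_6$ --- is precisely that the fundamental group is coprime to $p$: this prevents two factors of type $A_{p-1}$ from sharing a central $p$-element, so that Lemma~\ref{A2-punch} applies directly to each such factor, and the obstruction posed by a direct product of two extraspecial groups of exponent $p$, which required the delicate analysis of $\mathrm{Z}(\bH)/\mathrm{Z}^{\circ}(\bH)$ in the $E_6$ case (Proposition~\ref{E6-redux}), simply does not recur here.
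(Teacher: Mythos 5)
Your overall strategy---reduce via Bonnaf\'e--Rouquier to a quasi-isolated block of a Levi subgroup and then read off defect groups from \cite{En00} and \cite{KesMa}---is the one the paper uses for $E_6$ in Proposition~\ref{E6-redux} (and had drafted for $E_8$ at $p=5$), but it is not the route taken here, and as written it has a genuine gap in the quasi-isolated case. It is simply not true that every non-principal quasi-isolated block of $E_7(q)$ or $E_8(q)$ at $p=3$ has abelian defect groups or defect groups with an abelian subgroup of index $3$: for instance, quasi-isolated involutions of $E_8^*$ with centralizers of type $E_7A_1$ or $D_8$ give rise to non-principal blocks whose defect groups are essentially full Sylow $3$-subgroups of those centralizers --- nonabelian, and with no abelian subgroup of index $3$. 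Such blocks can only be excluded through the exponent-$p$ condition (their defect groups contain elements of order $9$), which you invoke only for the principal block; verifying this line by line through the tables is precisely the case-by-case work your proof omits. The same gap recurs inside your non-quasi-isolated case whenever the Levi $\bH$ has a component of type $E_6$ or $E_7$, since you defer those back to the same unverified claim; and your blanket assertion that type $B$, $C$, $D$ and general type-$A$ factors of an arbitrary proper Levi ``contribute an abelian exponent-$p$ defect group'' needs the full analysis of Propositions~\ref{wreathprod} and~\ref{classical} rather than a one-line appeal to \cite{FS}.

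The paper avoids all of this by a different, and decisive, first move: take $1\ne z\in\mathrm{Z}(P)$ and pass to $\bH:=\mathrm{C}_{\bL}(z)$, which is connected reductive of maximal rank because $\bL$ is simply connected; by \cite[Chapter~5, Theorem~9.6]{NaTs}, $P$ is then a defect group of $\bH^F$. Since $z$ is a \emph{central} $p$-element of $\bH^F$ and $P$ is indecomposable (Proposition~\ref{indec}), Lemma~\ref{split-stab}(i) lets one discard every $F$-orbit of components whose fixed points have no central element of order $p$. This instantly eliminates types $B$, $C$, $D$ and restricts the components to types $A_{p-1}$, $A_{2p-1}$, $A_{3p-1}$ and $E_6$, after which Lemmas~\ref{A2-abelpunch} and \ref{A2-punch}, Proposition~\ref{E6-redux} and a short analysis of the $A_5$- and $A_2$-orbit configurations finish the proof. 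Your observation that the fundamental group of $E_7$ and $E_8$ is prime to $p$ is correct and is genuinely relevant (it is why the obstruction of two $A_{p-1}$-components sharing a central $p$-element, which complicates the $E_6$ case, cannot recur), but it does not by itself substitute for the reduction via $\mathrm{C}_{\bL}(z)$, and without that reduction the quasi-isolated bookkeeping you sketch is both larger than you indicate and, in the form stated, incorrect.
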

\begin{proof} Suppose if possible that $P$ is not abelian. As before $ P$ has exponent $p$, and is indecomposable and $P$ does not have an abelian subgroup of index $p$. Let $ z \in \mathrm{Z}(P)$. Since $\bL$ is simply connected, $\bH:= \mathrm{C}_{\bL }(z)$ is a connected reductive subgroup of $\bL$ of maximal rank and of semisimple rank at most $8$ and by \cite[Chapter 5,~Theorem~9.6]{NaTs}, $P$ is a defect group of $\bH^F$. The possible components of $\bH$ are of type $A$, $D$, $E_6 $ or $E_7$. 

Let $ {\mathcal X} $ be an $F$-stable subset of components of $\bH $ and let $ \bX$ be the product of the elements of ${\mathcal X} $. Suppose that $ \bX^F$ does not have a central element of order $p$. By Lemma~\ref{split-stab}(i), $ P = (P\cap \bX^F ) \times (P \cap \bY^F) $ where $\bY $ 
is the product of $\mathrm{Z}^{\circ} (\bH )$ and all components of $\bH$ other than those in ${\mathcal X}$. The indecomposability of $ P$ implies that either $ P \leq \bX^F $ or $ P\leq \bY^F $. Since $z$ is a central $p$-element of $\bH^F $, and $\bX^F$ does not have a central element of order $p$, it follows that $ P \leq \bY^F $. By replacing $\bH$ by $\bY $, we may assume that the fixed points of every $F$-orbit of components of $\bH $ have central elements of order $p$ ($\bY $ may have rank less than $\bH $). Thus, if $p=5 $ the only possible components are of type $A_4 $ and if $p=3 $, then the only possible components are of type $A_2 $, $A_5$, $A_8$ or $E_6 $.

Suppose that $\bH$ has an $F$- stable component $\bX $ of type $A_{p-1} $. Let $\bY $ be the product of all components of $\bH$ other than those in $\bX $ with $\mathrm{Z} ^{\circ}(\bH )$. By Lemma~\ref{split-stab}(i) and the indecomposability of $P$, we may assume that $\mathrm {Z} (\bX)^F \cap \mathrm {Z} (\bY)^F $ and hence $\bH^F/\bX^F\bY^F $ has order $p$. So, by Lemma~\ref{split-stab}(ii), $P $ is not contained in $\bX^F \bY^F$.
By Lemma~\ref{A2-abelpunch}, we may assume that $P\cap \bX^F$ is not abelian since otherwise we can replace $\bX $ by a torus. 
Since $\bX^F$ has a central element of order $p$, $\bX ^F $ is a special linear 
(respectively unitary) group. The only non-abelian defect groups of a finite special linear (or unitary) group of degree $p$ in non-describing characteristic are Sylow $p$-subgroups and $ P\cap \bX^F$ is a non-abelian defect group of $\bX^F$. Thus, $P\cap \bX^F$ is a Sylow $p$-subgroup of $\bX^F$ and consequently has order at least $p^p$. Since we have shown above that $P $ is not contained in $ \bX^F \bY^F $, by 
Lemma~\ref{A2-punch}, $ P$ has an element of order $p^2 $, a contradiction.
Thus, we may assume that any component of $ \bH$ of type $A_{p-1}$ lies in an $F$-orbit of size at least $2$.

If $p=5 $, the only case left to consider is that $ \bH$ has two components of type $A_4 $ (and these are the only ones) transitively permuted by $F$. In this case, by rank considerations, $ {\mathrm Z}^{\circ} (\bH)$ is trivial, and hence $\bH^F $ is isomorphic to a special linear or unitary group. In particular the Sylow $5$-subgroups of $\bH^F$ have an abelian subgroup of index $5$, a contradiction. This completes the proof for the case that $p=5 $.
 
Now assume that $p=3 $. Let us first consider the case that there is a component $\bX$ of $\bH$ of type $A_8$. Then $\bH=\bX =\SL_8$ and we may argue as in the first part of the proof of Proposition~\ref{classical}.

Let us next consider the case that there is a component $\bX$ of $\bH$ of type $A_5 $. If $\bX $ also has a component of type $A_2$, then by rank consideration this is the unique component of type $A_2 $ and we have ruled out this situation above. Thus $\bX$ is the unique component of $\bH$. Let $P_0 $ be a defect group of a covered block of $\bX^F$. 
The Sylow $3$-subgroup of $\mathrm{Z}^{\circ}(\bH)^F$ is contained in $\mathrm{Z}(P)$ and $\mathrm{Z}(P) \leq [P,P] \leq [\bX, \bX] \cap \bH^F \leq \bX^F $, hence we have that the Sylow $3$-subgroup of $\mathrm{Z}^{\circ}(\bH) ^F$ is contained in $\bX^F $ and in particular has order at most $3$. Thus, $P_0 $ has index at most $3$ in $P$. 
In particular $ P_0$ is non-abelian.
Now $\bX =\bM/Z$, where $\bM $ is a special linear group of degree $6$ (with a compatible $F$-action) and $Z$ is a central subgroup. Since $\mathrm{Z}(\bM) $ is cyclic of order $6$ (or $3$ if $r=2 $) and since $\bX$ has a central element of order $3$, $ Z$ is either trivial or of order $2$, $Z$ is $F$-stable and $ Z^F=Z $. Further, $\bM^F/Z $ is a normal subgroup of $\bX^F = (\bM/Z)^F$ of index $|Z|$. Thus $P_0$ is a defect group of 
$\bM^F/Z$ and up to isomorphism a defect group of $\bM^F$ and $\bM^F=\mathrm{SL}_6(q) $ (respectively $\mathrm {SU}_6(q)$). Since $\bM^F/Z$ has index prime to $3$, $ \bM^F/Z$ contains the $3$-part of the centre of $\bX^F$, hence $\bM^F$ has a central element of order $3$. Thus, $P_0 $ is the intersection with $\bX^F $ of a Sylow $3$-subgroup of the centraliser of a semisimple $3'$-element of $ \mathrm{GL}_6(q) $ (or $\mathrm{GU}_6(q)$). Since $P_0$ has exponent $3$ and is non-abelian, the possible structures of semisimple centralisers in $\mathrm{GL}_6(q) $ (or $\mathrm{GU}_6(q)$) force that the centraliser in $\mathrm{GL}_6(q)$ (respectively $\mathrm{GU}_6(q)$) has the form $\mathrm{GL}_3 (q^2) $. Hence $|P_0| \leq p^3 $ and $ |P| \leq p^ 4 $ a contradiction. 

Suppose $\bH$ has a component of type $E_6$. Arguing as in the previous case $\bH$ has no components of type $A_2$ and hence the $E_6$-component is the unique component of $\bH$. This component is of simply connected type since as explained in the beginning of the proof we may assume that the $F$-fixed point subgroup of every $F$-orbit of components of $\bH $ has central elements of order $3$ and we are done by Proposition~\ref{E6-redux} (note that we apply Proposition~\ref{E6-redux} here in the case that $Z=1$).

The only case left to consider is that all components of $\bH$ are of type $A_2 $ and no component is $F$-stable. By rank considerations and the fact that groups of type $E_8$ do not have semisimple centralisers with component type $A_2^4 $ (see the tables in \cite{De}), we are left with two possibilities: either $\bH $ has exactly three components, all of type $A_2 $ and in a single $F$-orbit or $\bH $ has exactly two components both of type $A_2 $ and in a single $F$-orbit. 
In any case, $ [\bH, \bH]^F $ has a quotient or subgroup $H_0$ isomorphic to $ \mathrm{PSL}_3 (q) $ (respectively $\mathrm{PSU}_3 (q)$) for some $q$ such that $ |[\bH, \bH]^F |/|H_0| $ equals $1$ or $3$. Let $ P_0 = P \cap [\bH, \bH ]$ and let $P_0'$ be either the intersection of $P_0$ with $H_0$ or the image of $P_0$ in $H_0$. Then $P_0'$ has exponent $3$. Since any $3$-subgroup of a finite projective special linear or unitary group of degree $3$ has an abelian subgroup of index $3$ and since the $3$-rank of these groups is $2$, it follows that $|P_0'| \leq 3^3 $. Hence $|P_0| \leq 3^4 $.

We claim that the index of $P_0$ in $P$ is at most $3$. Indeed,
let $ R$ be the Sylow $3$-subgroup of $\mathrm{Z}^{\circ}(\bH )^F $. Then $ R \leq \mathrm{Z}(P) \leq [P, P] \leq [\bH, \bH] $, that is $ R \leq P_0$. On the other hand, $ |P/P_0 R|$ divides $|\mathrm{Z}([\bH, \bH]^F)|_3 $ and we have seen from the structure of $ [\bH, \bH]^F $ that $ \mathrm{Z}([\bH, \bH]^F )$
has order at most $3$. This proves the claim. Hence $|P| \leq 3^5 $, a contradiction.
\end{proof}

\section{Consequences}

We note some consequences of Theorem~\ref{mainthm}.

\begin{thm}\label{kl1}
Let $B$ be a block of a finite group such that $k(B)-l(B)=1$ (e.\,g. a block with multiplicity $1$). Then $B$ has elementary abelian defect groups.
\end{thm}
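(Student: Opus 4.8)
The plan is to translate the numerical hypothesis into transitivity of the block's fusion system, apply Theorem~\ref{mainthm}, and then rule out the extraspecial possibility; the last step is where the real work lies.

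First I would use Brauer's decomposition of $k(B)$ into subsection contributions. Fixing a maximal $B$-subpair $(P,e_P)$ and letting $(u,b_u)$ run over representatives of the $G$-classes of $B$-subsections (with $(\langle u\rangle,b_u)\le (P,e_P)$), one has
\[ k(B)=\sum_{(u,b_u)} l(b_u), \qquad\text{hence}\qquad k(B)-l(B)=\sum_{u\neq 1} l(b_u), \]
the second sum ranging over the nontrivial subsections and consisting of positive integers. The nontrivial subsection classes are in bijection with the $\mathcal F$-conjugacy classes of nontrivial elements of $P$, where $\mathcal F=\mathcal F_P(B)$. Thus $k(B)-l(B)=1$ forces exactly one such class, i.e.\ $\mathcal F$ is transitive, and moreover $l(b_u)=1$ for the unique nontrivial subsection $(u,b_u)$. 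By Theorem~\ref{mainthm}, $P$ is then elementary abelian or extraspecial of order $p^3$ with $p\in\{3,5\}$; it remains only to exclude the latter.

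So suppose $P$ is extraspecial of order $p^3$, necessarily of exponent $p$. I would take the representative of the unique nontrivial subsection to be $(z,b_z)$ with $1\neq z\in\mathrm Z(P)$; since $z$ is central, $b_z$ is a block of $\mathrm C_G(z)$ with defect group $P$ and fusion system $\mathrm C_{\mathcal F}(z)$. Transitivity makes this centraliser system non-nilpotent: all nontrivial elements of $\mathrm Z(P)$ are $\mathcal F$-conjugate and, being central, are fully $\mathcal F$-normalised, so by saturation the relevant isomorphisms extend to $\mathrm{Aut}_{\mathcal F}(P)$, which therefore surjects onto $\mathrm{Aut}(\mathrm Z(P))\cong C_{p-1}$; meanwhile the fusion of the \emph{noncentral} elements forces $\mathrm{Out}_{\mathcal F}(P)\le\GL_2(p)$ to meet $\SL_2(p)$ nontrivially (by the Ruiz--Viruel analysis \cite{RV}). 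Since the action on $\mathrm Z(P)\cong P'$ is by the determinant of the induced map on $P/\mathrm Z(P)$, any nontrivial element of $\mathrm{Out}_{\mathcal F}(P)\cap\SL_2(p)$ fixes $z$ and hence yields a nontrivial outer automorphism in $\mathrm C_{\mathcal F}(z)=\mathcal F_P(b_z)$. In particular $b_z$ is not nilpotent.

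The hard part will be the final implication: I want to conclude $l(b_z)\ge 2$, contradicting $l(b_z)=1$. For defect groups of order $p^3$ the non-nilpotent blocks are completely understood, and for the only relevant primes $p\in\{3,5\}$ (cf.\ \cite{RV}, \cite{KS}) such a block has at least two irreducible Brauer characters; equivalently, $l(b_z)=1$ would force $b_z$ to be nilpotent, which we have just excluded. Failing a clean appeal to ``$l=1\Rightarrow$ nilpotent'' for these defect groups, the same contradiction can be reached by inspecting the finitely many fusion-transitive blocks with extraspecial defect group produced by the classification-based analysis underlying Theorem~\ref{mainthm}. Either way the extraspecial case is impossible, so $P$ is elementary abelian, as claimed.
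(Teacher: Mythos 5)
Your reduction to the transitive case is correct and is exactly how the intended argument begins. Note that the paper's entire proof of this theorem is the single line ``see the proof of Theorem~3.6 in \cite{KNST}'': that proof opens with Brauer's formula $k(B)=\sum_{(u,b_u)}l(b_u)$, deduces that there is exactly one class of nontrivial subsections $(z,b_z)$ and that $l(b_z)=1$, observes that $\mathcal F_P(B)$ is therefore transitive, and then invokes the classification of such blocks (now supplied in full generality by Theorem~\ref{mainthm}). So up to and including your application of Theorem~\ref{mainthm}, your proposal matches the paper's route.

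The gap is in your exclusion of the extraspecial case. Your argument rests on the implication ``$b_z$ non-nilpotent with defect group extraspecial of order $p^3$, $p\in\{3,5\}$, implies $l(b_z)\ge 2$'', i.e.\ on ``$l=1\Rightarrow$ nilpotent'' for these defect groups. That is not a theorem, and the general principle behind it is false: already for the abelian defect group $C_3\times C_3$ there are non-nilpotent blocks with $l(B)=1$ (take a faithful block of $(C_3\times C_3)\rtimes Q_8$ with $Q_8$ acting through its quotient $C_2\times C_2$; the relevant twisted group algebra $k_\alpha[C_2\times C_2]$ is a $2\times 2$ matrix algebra, so $l(B)=1$, while the inertial quotient $C_2\times C_2$ is nontrivial). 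For defect group extraspecial of order $27$ and exponent $3$ the determination of $k(B)$ and $l(B)$ for arbitrary blocks is a well-known open problem, so no classification can be cited here. Your fallback --- inspecting ``the finitely many fusion-transitive blocks with extraspecial defect group produced by the analysis underlying Theorem~\ref{mainthm}'' --- is not available either: the proof of Theorem~\ref{mainthm} assumes $|P|\ge p^4$ from its first line and never touches the $p^3$ case, and fusion-transitive blocks with extraspecial defect group of order $p^3$ genuinely exist (e.g.\ the principal $3$-block of the Tits group ${}^2F_4(2)'$ and the principal $5$-block of the Thompson group), so they cannot be eliminated by any inspection. What eliminates them is the extra numerical datum that $(z,b_z)$, with $z\in\mathrm{Z}(P)$, is a \emph{major} subsection with $l(b_z)=1$; exploiting this requires classical block-theoretic results on major subsections with $l=1$, as in the proof of Theorem~3.6 of \cite{KNST}, not a fusion-system nilpotency criterion.
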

\begin{proof}
See proof of Theorem~3.6 in \cite{KNST}.
\end{proof}

\begin{cor}
Let $B$ be a block of a finite group such that $k(B)=3$. Then $B$ has elementary abelian defect groups.
\end{cor}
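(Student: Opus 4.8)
The plan is to convert the hypothesis $k(B)=3$ into information about $l(B)$ and then invoke Theorem~\ref{kl1}. Recall the subsection formula $k(B)=\sum_{(u,b_u)}l(b_u)$, the sum running over a set of representatives of the $G$-conjugacy classes of $B$-subsections; separating off the trivial subsection gives $k(B)-l(B)=\sum_{u\neq 1}l(b_u)\geq 0$, with equality if and only if $B$ has defect zero, i.e. $k(B)=1$. Since $k(B)=3$ we conclude $l(B)\neq 3$, and as $l(B)\geq 1$ this leaves only $l(B)\in\{1,2\}$.

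If $l(B)=2$, then $k(B)-l(B)=1$ and Theorem~\ref{kl1} shows immediately that $B$ has elementary abelian defect groups. Thus the entire difficulty is concentrated in the case $l(B)=1$, which must be treated by hand: here $k(B)-l(B)=2$, Theorem~\ref{kl1} does not apply, and the case genuinely occurs (the principal $3$-block of $C_3$ has $k(B)=3$, $l(B)=1$, with elementary abelian defect group $C_3$). So $l(B)=1$ is the heart of the matter, and the goal there is to prove directly that a defect group $D$ must have order $3$.

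For $l(B)=1$ I would exploit the Cartan invariant. The Cartan matrix is the $1\times 1$ matrix $(|D|)$, and it equals $\sum_{i=1}^{3}d_i^{2}$, where $d_1,d_2,d_3\geq 1$ are the (single-column) decomposition numbers; hence $|D|=d_1^{2}+d_2^{2}+d_3^{2}$. If $d_1=d_2=d_3=1$ then $|D|=3$ and $D\cong C_3$, as required. To exclude solutions with some $d_i\geq 2$, note that $\chi_i(1)=d_i\,\phi(1)$ for the unique irreducible Brauer character $\phi$, so (after normalising by the height-zero character that every block possesses) the height of $\chi_i$ equals $\nu_p(d_i)$. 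Reducing the relation $d_1^2+d_2^2+d_3^2=|D|\equiv 0\pmod p$ modulo $p$ then constrains which $d_i$ can be divisible by $p$: for $p=3$, where nonzero squares are $\equiv 1$, the number of $d_i$ prime to $3$ must be $\equiv 0\pmod 3$, and since at least one $d_i$ is prime to $p$ this forces all three to be, so \emph{every} $\chi_i$ has height zero. By Brauer's height-zero theorem $D$ is then abelian, and an $l(B)=1$ block with abelian defect group is nilpotent, whence $k(B)=|D|$ and therefore $|D|=3$.

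The main obstacle is exactly this $l(B)=1$ analysis. The clean half of the proof — the reduction $l(B)\in\{1,2\}$ and the direct appeal to Theorem~\ref{kl1} when $l(B)=2$ — disposes of the generic case with no effort. What remains delicate is showing that an $l(B)=1$ block with $k(B)=3$ cannot have a defect group of order exceeding $3$. The argument above settles $p=3$ cleanly, but for primes $p\equiv 1\pmod 4$ the congruence permits ``mixed'' solutions (some $d_i$ divisible by $p$, yielding a positive-height character and a potentially nonabelian $D$), and these must be ruled out by a finer arithmetic analysis of $d_1^2+d_2^2+d_3^2=p^d$ together with the subsection count $\sum_{u\neq 1}l(b_u)=2$: a central element $z$ of order $p$ produces major subsections $(z^{j},b_{z^{j}})$ with each $l(b_{z^{j}})\geq 1$, and bounding the number of their $\mathcal{F}$-classes against the total $2$ should again force $D=\langle z\rangle\cong C_3$. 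The essential inputs — Brauer's height-zero theorem and the nilpotency of $l(B)=1$ blocks with abelian defect group — are precisely where the real content of the $l(B)=1$ case lies.
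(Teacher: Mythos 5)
Your reduction to $l(B)\in\{1,2\}$ and your treatment of the case $l(B)=2$ via Theorem~\ref{kl1} coincide exactly with the paper's proof. The divergence --- and the problem --- is the case $l(B)=1$. The paper disposes of it in one line by citing K\"ulshammer's result in \cite{Kk4}, which shows via a classification of small symmetric local algebras (applied to $Z(B)$, which has dimension $k(B)=3$) that a block with $k(B)=3$ and $l(B)=1$ has defect groups of order $3$. You instead attempt to prove this from scratch, and your argument has a genuine gap in two places.

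First, you only complete the argument for $p=3$; for other primes you concede that the ``mixed'' solutions of $d_1^2+d_2^2+d_3^2=p^d$ ``must be ruled out by a finer arithmetic analysis'' and that a subsection count ``should again force'' $D\cong C_3$. That is a sketch, not a proof. Second, even your $p=3$ argument rests on two inputs that are not actually available: (a) the implication ``all ordinary characters of $B$ have height zero $\Rightarrow D$ abelian'' is the hard direction of Brauer's Height Zero Conjecture, which was open when this paper was written and whose eventual proof uses the full classification --- vastly heavier machinery than the statement being proved; and (b) the implication ``$l(B)=1$ and $D$ abelian $\Rightarrow B$ nilpotent'' is not a theorem: it is essentially a special case of Alperin's weight conjecture (equivalently, of Brou\'e's abelian defect group conjecture at the level of $l(B)$) and remains open in general. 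Since you yourself identify these two statements as ``precisely where the real content of the $l(B)=1$ case lies,'' the proof is incomplete. The remedy is simply to quote \cite{Kk4}, which settles the $l(B)=1$ case by elementary ring-theoretic means and without restricting to $p=3$ in advance.
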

\begin{proof}
We have $l(B)\in\{1,2\}$. In case $l(B)=1$ it was shown by K\"ulshammer~\cite{Kk4} that the defect groups of $B$ have order $3$. The remaining case $l(B)=2$ follows from Theorem~\ref{kl1}.
\end{proof}

\end{document}